\documentclass{amsart}
\usepackage[a4paper,
            bindingoffset=0.2in,
            left=1.2in,
            right=1.2in,
            top=1in,
            bottom=1in,
            footskip=.25in]{geometry}
\usepackage[english]{babel}
\usepackage{amsthm}
\usepackage{mathrsfs}
\usepackage{amsmath}
\usepackage{amsfonts}
\usepackage{mathtools}
\usepackage{xcolor}
\usepackage{wrapfig}
\usepackage{hyperref}
 \usepackage{float} 
\usepackage{pgfplots}
\usepackage{cite}
\usepackage{amssymb,enumerate}
\pgfplotsset{compat=1.18,width=10cm}
\usepackage{subcaption}
\usepackage{tikz}
\usepackage{amssymb}
\newtheorem{theorem}{Theorem}
\newtheorem{lemma}[theorem]{Lemma}

\theoremstyle{definition}

\newtheorem{definition}[theorem]{Definition}

\theoremstyle{remark}
\numberwithin{theorem}{section}

\newcommand{\p}[1]{\lvert \nabla#1 \rvert}
\newcommand{\m}[1]{\lVert \nabla#1 \rVert}
\newcommand{\mm}[1]{\lVert #1 \rVert}
\newcommand{\ve}[1]{\lvert #1 \rvert}
\newcommand{\e}{\lambda}
\newcommand{\ep}{\epsilon}

\newcommand{\x}{\rho}
\newcommand{\R}{{\mathbb R}}

\newcommand{\K}{{\mathcal{K}}}
\newcommand{\h}{{\mathcal{T}}}
\newcommand{\Z}{{\mathcal{Z}}}
\newcommand{\op}{{\mathcal{L}}}
\newcommand{\f}{{\mathcal{M}}}
\newcommand{\A}{{\mathcal{A}_*}}
\newcommand{\q}{\Omega}
\newcommand{\ba}{\beta}
\newcommand{\ga}{\gamma}
\newcommand{\al}{\alpha}
\newcommand{\be}{\begin{equation}}
\newcommand{\ee}{\end{equation}}
\numberwithin{equation}{section}
\allowdisplaybreaks
\author[Anupma Arora]
{Anupma Arora}
\address{Anupma Arora\hfill\break
Department of Mathematics\newline
 Birla Institute of Technology and Science Pilani \newline
  Pilani Campus, Vidya Vihar \newline
 Pilani, Jhunjhunu \newline
 Rajasthan, India - 333031}
 \email{p20200439@pilani.bits-pilani.ac.in;anupmaarora17@gmail.com }
 
 \author[Gaurav Dwivedi ]
{Gaurav Dwivedi}
\address{Gaurav Dwivedi \hfill\break
Department of Mathematics\newline
 Birla Institute of Technology and Science Pilani \newline
  Pilani Campus, Vidya Vihar \newline
 Pilani, Jhunjhunu \newline
 Rajasthan, India - 333031}
 \email{gaurav.dwivedi@pilani.bits-pilani.ac.in}
\subjclass[2020]{35J15;35J20;35D30;35J60}
\keywords{Musielak-Orlicz Sobolev spaces; critical growth; Kirchhoff problem; Choquard nonlinearity; Multiphase operator with variable exponents}
\begin{document}
\title[ Kirchhoff type multiphase problem ] {On Choquard-Kirchhoff type critical multiphase problem}

\begin{abstract}
  In this paper, we obtain the existence of weak solutions to the Choquard-Kirchhoff type critical multiphase problem:
\begin{equation*}
\left\{\begin{array}{cc}
    &-M(\varphi_{\h}(\p{u}))div(\p{u}^{p(x)-2}\nabla u+a_1(x)\p{u}^{q(x)-2}\nabla u+a_2(x)\p{u}^{r(x)-2}\nabla u)\\
       & =\e g(x)\ve{u}^{\gamma(x)-2}u+\theta B(x,u)+\kappa \left(\int_{\q}\frac{F(y,u(y))}{\ve{x-y}^{d(x,y)}}\, dy\right) f(x,u) \ \text{in} \ \q,\\
       & u=0  \ \text{on} \ {\partial \Omega}.
\end{array}\right.
\end{equation*}

The term $B(x,u)$ on the right-hand side generalizes the critical growth. We obtain existence and multiplicity results by establishing certain embedding results and concentration compactness principle along with the Hardy-Littlewood-Sobolev type inequality for the Musielak Orlicz Sobolev space $ W^{1,\h}(\q)$.
\end{abstract} 
\maketitle

\maketitle
\section{Introduction}
This article investigates the existence of weak solutions for the following problem:
\begin{equation}
\label{model}
\tag{$P_{\e}$}\left\{
\begin{array}{cc}
    &-M(\varphi_{\h}(\p{u} ))div(\p{u}^{p(x)-2}\nabla u+a_1(x)\p{u}^{q(x)-2}\nabla u+a_2(x)\p{u}^{r(x)-2}\nabla u)\\
        &=\e g(x)\ve{u}^{\gamma(x)-2}u+\theta B(x,u)+\kappa \left(\int_{\q}\frac{F(y,u(y))}{\ve{x-y}^{d(x,y)}}\, dy\right) f(x,u) \ \text{in} \ \q,\\
        & u=0  \ \text{on} \ {\partial \Omega},
\end{array}\right.
\end{equation}
where $\q$ is a bounded domain with a smooth boundary $\partial \Omega$ and 
\[
    \varphi_{\h}(u)=\int_{\q}\left(\frac{| u|^{p(x)}}{p(x)}+a_1(x)\frac{| u|^{q(x)}}{q(x)}+a_2(x)\frac{| u|^{r(x)}}{r(x)}\right)dx,\]
\[B(x,t)=c_1(x) \ve{t}^{\ba_1(x)-2}t+c_2(x)a_1(x)^{\frac{\ba_2(x)}{q(x)}}\ve{t}^{\ba_2(x)-2}t+c_3(x)a_2(x)^{\frac{\ba_3(x)}{r(x)}}\ve{t}^{\ba_3(x)-2}t,\]
and $\e,\theta,\kappa$ are positive parameters. Additionally, $a_1(\cdot),a_2(\cdot)\in L^{\infty}(\q)$ with $a_1(x),a_2(x)\geq 0$ a.e. in $\Omega,$ and the exponents $p,q,r,\ga, \ba_1,\ba_2,\ba_3 \in C_+(\overline{\q}).$ The function $f:\q \times \R \to \R$ is a continuous function with primitive $F,$ and $ d:\overline{\q} \times \overline{\q} \to \R$ is a continuous function such that $0 < d^{-} \leq d^{+} < N $. Moreover, we assume that the following conditions are satisfied:
 \begin{itemize}
    \item [$(M_1)$] The Kirchhoff term $M\in C(\R^+, \R^+)$ and there exists $m_0>0$ such that $M(t)\geq m_0$ for all $t\geq 0$.
     \item [$(M_2)$]There exists $\eta >\frac{r^+}{\ba_1^-}$ such that $\Hat{M}(t)\geq \eta t M(t)$ for all $t \geq 0,$ where $\Hat{M}(t)=\int_{0}^{t}M(s)\,ds$.
     \item[$(M_3)$] There exists $\zeta>\frac{\ba_3^+}{p^-}$ such that $\limsup_{t \to 0}\frac{\Hat{M}(t)}{t^{\zeta}}<\infty$.
      \item [$(H_1)$] $p(x)<N$ and $\ga(x)<p(x)<q(x)<r(x)<p^*(x)$ for all $x\in \overline{\Omega}.$
      \item[$(H_2)$]$ p,q,r\in C^{0,1}(\overline{\q}),\ p(x)<q(x)<r(x)<N$ and $0\leq a_1,a_2\in C^{0,1}(\overline{\q})$. Moreover, $\frac{r^+}{p^-}<1+\frac{1}{N}\cdot$
     \item[$(H_3)$]$0<g\in L^{\xi(x)}(\q),$ where $\xi(x)\in C_+(\overline{\q})$ is such that $\ga(x)<\frac{\xi(x)-1}{\xi(x)}p^*(x)$ for all $x\in \overline{\q}$.
     \item[$(H_4)$] The functions $c_i \in L^{\infty}(\q)$ for $i=1,2,3$ are such that $c_1>0, c_2,c_3\geq 0; \ r(x)<\ba_1(x)\leq p^*(x) \ \forall \  x\in \overline{\q};$
\[p^*(x)-\ba_1(x)=q^*(x)-\ba_2(x)=r^*(x)-\ba_3(x)  \ \forall \  x\in \overline{\q} \]
and $C=\{x \in \q: p^*(x)=\ba_1(x)\}\neq \emptyset$.
    \item [$(L_1)$] For all $x \in \overline{\q}$ and $t \in \R$ there exists a constant $c>0$ and $\al_1,\al_2,\al_3 \in C_+(\overline{\q})$ such that 
    \[\ve{f(x,t)}\leq c\left(\ve{t}^{\al_1(x)-1}+a_1(x)^{\frac{\al_2(x)}{q(x)}}\ve{t}^{\al_2(x)-1}+a_2(x)^{\frac{\al_3(x)}{r(x)}}\ve{t}^{\al_3(x)-1}\right)\]
    where $\al_i \in M_i(\q)$ for $i=1,2,3$, $1<\al_1(\cdot)<\al_2(\cdot)<\al_3(\cdot)$ and $\al_1^->\frac{r^+}{2}$ with
    \begin{align*}
        M_1(\q)&=\left\{ s\in C_+(\overline{\q}): 1<s(x)\frac{2N}{2N-d^-}\leq s(x)\frac{2N}{2N-d^+}<p^*(x) \right\},\\
        M_2(\q)&=\left\{ s\in C_+(\overline{\q}): 1<s(x)\frac{2N}{2N-d^-}\leq s(x)\frac{2N}{2N-d^+}<q^*(x) \right\},\\
        M_3(\q)&=\left\{ s\in C_+(\overline{\q}): 1<s(x)\frac{2N}{2N-d^-}\leq s(x)\frac{2N}{2N-d^+}<r^*(x) \right\}.
    \end{align*}
   \item [$(L_2)$] There exists $l$ with $\frac{r^+}{\eta}<l<\ga^-$ such that $0<lF(x,t)\leq 2 f(x,t)t$ holds for all $t\in \R^+$ where $\eta$ is as defined in $(M_2)$ and $F(x,t)=\int_{0}^{t}f(x,s)\,ds \ \forall t>0$.
   \item [$(L_3)$] $f(x,-t)=-f(x,t),\, \forall (x,t) \in \Omega \times \R$.
  \end{itemize}

The operator involved in \eqref{model} is known as multiphase operator with variable exponents. To our knowledge, only two research works deal with multiphase operators with variable exponents. Dai and Vetro \cite{mp1} introduced this operator, analyzed the associated functional spaces, specifically the Musielak-Orlicz-Sobolev space $W^{1,\h}(\q)$ and established embedding and regularity results. They also considered a problem involving a convection term. Subsequently, Vetro \cite{mp2} derived a priori upper bounds and demonstrated the existence of positive and negative weak solutions. For multiphase problems with constant exponents, we refer to \cite{mp3}.
Notably, this operator generalizes the double-phase operator with variable exponents, which is recovered when $a_2 \equiv 0$.

The main goal of this paper is twofold. Firstly, we will establish certain embedding results for the space $W^{1,\h}(\q)$. Inspired by Crespo Blanco et al. \cite{vdp2}, we prove that $W^{1,\h}(\q)\hookrightarrow L^{\h_*}(\q),$ where $\h_*$ is Sobolev conjugate of function $\h$. Then, following the idea of \cite{vdp6}, we establish the embedding 
\[W^{1,\h}(\q)\hookrightarrow L^{\A}(\q),\]
where $\A(x,t)=t^{p^*(x)}+a_1(x)^{\frac{q^*(x)}{q(x)}}t^{q^*(x)}+a_2(x)^{\frac{r^*(x)}{r(x)}}t^{r^*(x)}$ for $x\in \overline{\q}$ and $t\in[0,\infty)$.
Our problem \eqref{model} has critical growth. The main difficulty in dealing with critical problems is the lack of compactness. The non-compactness associated with the variational approach adds complexity and interest to the critical problems. To address the issue of lack of compactness, Lions \cite{Lions1, Lions2} introduced the two concentration-compactness principles \cite[Lemma I.1]{Lions1} and \cite[ Lemma I.1]{Lions2}. Following the foundational work of Brezis and Nirenberg \cite{Brezis} on the Laplace equation, these principles have been extensively expanded and generalized in various directions. For $p(x)$-Laplacian we refer interested readers to \cite{Silva,cp1,cp2,cp3,cp4,cp5} and references therein. 
For double-phase problems involving critical growth $p^*$, one can refer to \cite{dpk1,cdp3}. Hai Ha and Ho \cite{vdp7} proved multiplicity results for double phase problems with critical growth switching between $p^*(\cdot)$ and $q^*(\cdot)$ in the bounded domain. They also developed the concentration-compactness principle (CCP) for the corresponding Musielak-Orlicz-Sobolev space. Additional works addressing such growth include \cite{cdp1, cdp2}.
 To address the critical nonlinearity $B(x, u)$ in our context, we establish the concentration-compactness principle for the space $W_0^{1,\h}(\q).$

The problem \eqref{model} has Choquard type nonlinearity, which is primarily tackled by the Hardy-Littlewood-Sobolev inequality (HLSI). In the second part of this paper, we establish an HLSI-type inequality for the space $W^{1,\h}(\q)$ and derive the existence and multiplicity results for a critical Kirchhoff type multiphase problem with generalized Choquard nonlinearity. Lieb-Loss proved the HLSI for the Sobolev space \cite{Lieb}. Alves and Tavaras \cite{cv1} established the HLSI for variable exponent Sobolev spaces to deal with the Choquard problem involving variable exponents. Alves et al. \cite{co} dealt with the generalized Choquard problem in Orlicz spaces. Arora et al. \cite{cn3} explored the Choquard problem involving the double-phase operator. Gupta and Dwivedi \cite{Gupta} worked on a generalized Choquard problem with vanishing potential in the fractional Musielak-Orlicz space. Further, for existence results on Choquard problems, we refer to \cite{cn1,cn2}(for Laplacian), \cite{fr1}(for fractional Laplacian),\cite{cv3,cv4,cv5}(for $p(x)$ Laplacian), \cite{cv2} (for fractional $p(x)$ Laplacian), \cite{cpq}(for $p-q$ Laplacian) and references therein. For a survey of Choquard problems, see \cite{guide,guide2}.

To our knowledge, this is the first work dealing with critical and Choquard nonlinearities for multiphase operators. Moreover, our problem contains a nonlocal Kirchhoff term. The Kirchhoff-type problem originates from the work of \cite{kirchhoff1876}. For further details on Kirchhoff-type problems, one can see \cite{dpk3,dpk1, FJSA,hamydy2011existence,zhao} and references therein. 

Our methodology to establish the existence and multiplicity results for \eqref{model} is inspired by \cite{cv3}. Next, we state our main results:
  \begin{theorem}\label{thm1}
   Let $(H_2)-(H_4),$ $(M_1)-(M_3),$ and $ (L_1)-(L_2)$ hold. Then there exists $\e_0, \kappa^*>0$ such that for every $0<\e<\e_0$ and $\kappa >\kappa^*,$ the problem \eqref{model} admits at least two nontrivial solutions. 
  \end{theorem}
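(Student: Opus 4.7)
The plan is to realize solutions of \eqref{model} as critical points of the $C^{1}$ energy functional $J_{\e,\kappa}:W_0^{1,\h}(\q)\to\R$ given by
\[
J_{\e,\kappa}(u) = \widehat{M}\bigl(\varphi_{\h}(\p{u})\bigr) - \e\int_{\q}\frac{g(x)\ve{u}^{\ga(x)}}{\ga(x)}\,dx - \theta\int_{\q}\mathcal{B}(x,u)\,dx - \frac{\kappa}{2}\int_{\q}\int_{\q}\frac{F(y,u(y))F(x,u(x))}{\ve{x-y}^{d(x,y)}}\,dy\,dx,
\]
where $\mathcal{B}(x,\cdot)$ is the primitive of $B(x,\cdot)$. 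The embedding $W^{1,\h}(\q)\hookrightarrow L^{\A}(\q)$ and the Hardy--Littlewood--Sobolev-type inequality for $W^{1,\h}(\q)$ announced in the introduction make each term well defined, while standard Musielak--Orlicz arguments give $J_{\e,\kappa}\in C^{1}$; its critical points are exactly the weak solutions of \eqref{model}.

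Because $(H_1)$ forces $\ga^{-}<p^{-}$, the concave term dominates $J_{\e,\kappa}$ near the origin. Using the lower bound in $(M_1)$, the norm--modular inequalities in $W_0^{1,\h}(\q)$, and the embeddings above to handle both the critical and Choquard terms, one produces $\e_0,\rho_0,\alpha>0$ with
\[
J_{\e,\kappa}(u)\ge\alpha\qquad\text{whenever }\mm{u}=\rho_0,\ 0<\e<\e_0,
\]
while testing $J_{\e,\kappa}$ at $tu_0$ for a fixed $u_0\in C_c^{\infty}(\q)$ with $\int_{\q}g\ve{u_0}^{\ga}\,dx>0$ and $t\to 0^{+}$ gives $\inf_{\overline{B_{\rho_0}}}J_{\e,\kappa}<0$. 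Ekeland's variational principle on $\overline{B_{\rho_0}}$ then yields a Cerami sequence at the negative level $c_1:=\inf_{\overline{B_{\rho_0}}}J_{\e,\kappa}<0$, which, once compactness is verified, gives the first nontrivial solution $u_1$ with $J_{\e,\kappa}(u_1)=c_1<0$.

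The second solution is constructed by the Mountain Pass Theorem. The Ambrosetti--Rabinowitz-type conditions $(M_2)$ and $(L_2)$ force $J_{\e,\kappa}(te_\star)\to-\infty$ as $t\to\infty$ for any fixed $e_\star\in W_0^{1,\h}(\q)\setminus\{0\}$. Taking $e_\star$ to be a suitable cut-off of a test function concentrating at a point $x_0\in C\ne\emptyset$ from $(H_4)$, and $t_\star>0$ with $\mm{t_\star e_\star}>\rho_0$ and $J_{\e,\kappa}(t_\star e_\star)<0$, the mountain-pass level
\[
c_2:=\inf_{\sigma\in\Gamma}\max_{t\in[0,1]} J_{\e,\kappa}(\sigma(t)),\qquad \Gamma=\{\sigma\in C([0,1],W_0^{1,\h}(\q)):\sigma(0)=0,\ \sigma(1)=t_\star e_\star\},
\]
satisfies $c_2\ge\alpha>0$ and carries a Cerami sequence.

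The decisive obstacle is compactness, since \eqref{model} is doubly critical (via $B(x,u)$ and via the Choquard integral). Invoking the concentration--compactness principle for $W_0^{1,\h}(\q)$ together with the HLSI-type inequality developed in the paper, one extracts a threshold $c^{*}>0$ below which every Cerami sequence of $J_{\e,\kappa}$ admits a strongly convergent subsequence. The negative level $c_1$ lies trivially below $c^{*}$. For $c_2$, a fine energy estimate along the concentrating test function at $x_0\in C$ exploits that, with $u=te_\star$, the Choquard contribution is of order $\kappa\, t^{2\al_1(x_0)}$ with $2\al_1^{-}>r^{+}$ by $(L_1)$, so it dominates both $\widehat{M}(\varphi_{\h}(\p{u}))$ and the critical term for $t$ large. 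This pushes $\max_{t\ge 0}J_{\e,\kappa}(te_\star)\to 0$ as $\kappa\to\infty$, so there is an explicit $\kappa^{*}>0$ with $c_2<c^{*}$ whenever $\kappa>\kappa^{*}$. Both Cerami sequences therefore converge, and the limits $u_1,u_2$ are distinct nontrivial weak solutions of \eqref{model} because $J_{\e,\kappa}(u_1)<0<J_{\e,\kappa}(u_2)$.
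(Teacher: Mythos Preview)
Your overall strategy matches the paper's: mountain-pass geometry plus Ekeland's principle on a small ball, compactness of Palais--Smale sequences via the concentration--compactness principle below an explicit threshold $c_*$, and the observation that the mountain-pass level drops below $c_*$ once $\kappa$ is large. Two points, however, deserve correction.

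First, your justification for $\inf_{\overline{B_{\rho_0}}}J_{\e,\kappa}<0$ is not right. You invoke ``$(H_1)$ forces $\ga^{-}<p^{-}$'' to claim the concave term dominates near the origin, but $(H_1)$ only gives $\ga(x)<p(x)$ pointwise, which does not imply $\ga^{+}<p^{-}$; more importantly, without control on $\widehat{M}$ near $0$ the Kirchhoff term need not be of order $t^{p^{-}}$. This is exactly where $(M_3)$ enters: from $\widehat{M}(s)\le C s^{\zeta}$ with $\zeta>\ba_3^{+}/p^{-}$ one gets $\widehat{M}(\varphi_{\h}(t\p{v}))\le C t^{\zeta p^{-}}$ for small $t$, and since $\zeta p^{-}>\ba_3^{+}>\ga^{+}$ the negative terms (both the concave $g$-term and the $\theta$-term, which is of order $t^{\ba_3^{+}}$) force $J(tv)<0$. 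Without $(M_3)$ your step fails in general.

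Second, two smaller inaccuracies: under $(L_1)$ the Choquard exponents satisfy $\al_i(\cdot)\tfrac{2N}{2N-d^{\pm}}$ strictly below the Sobolev critical ones, so the Choquard term is \emph{subcritical} and causes no loss of compactness---the problem is not ``doubly critical''; and for the estimate $c_2<c_*$ no concentrating test function at $x_0\in C$ is needed. The paper simply fixes any $z\in W_0^{1,\h}(\q)$ with $\mm{z}=1$, shows that the maximizer $t_\kappa$ of $t\mapsto J(tz)$ stays bounded (via $(M_2)$) and then that $t_\kappa\to 0$ as $\kappa\to\infty$ (otherwise the Choquard term in $\langle J'(t_\kappa z),t_\kappa z\rangle=0$ blows up), whence $c_\kappa\le J(t_\kappa z)\le \widehat{M}(\varphi_{\h}(t_\kappa\p{z}))\to 0$.
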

  
  \begin{theorem}\label{thm2}
  Let $(H_2)-(H_4), \,(L_1)-(L_3)$ and $(M_1)-(M_2)$ hold. Then there exists $\e_0$ and a sequence $\{\theta_n\} \in (0,\infty)$ with $\theta_{n+1}<\theta_{n}$ such that the problem \eqref{model} has at least $n$ pair of solutions for $\theta_{n+1}<\theta\leq \theta_{n}$ and $0<\e<\e_0$. 
  \end{theorem}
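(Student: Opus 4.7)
The plan is to apply a symmetric Lusternik--Schnirelman argument based on the Krasnoselskii genus to the Euler--Lagrange functional
\[
\mathcal{J}_{\e,\theta,\kappa}(u)=\widehat{M}(\varphi_{\h}(\p{u}))
-\e\int_{\q}\frac{g(x)\ve{u}^{\ga(x)}}{\ga(x)}\,dx
-\theta\int_{\q}\mathcal{B}(x,u)\,dx
-\frac{\kappa}{2}\int_{\q}\!\int_{\q}\frac{F(x,u(x))F(y,u(y))}{\ve{x-y}^{d(x,y)}}\,dy\,dx,
\]
where $\mathcal{B}(x,t)=\int_{0}^{t}B(x,s)\,ds$ and the ambient space is $X=W^{1,\h}_{0}(\q)$. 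By $(L_3)$ the primitive $F(x,\cdot)$ is even and $B(x,\cdot)$ is odd in $t$, so $\mathcal{J}_{\e,\theta,\kappa}$ is even with $\mathcal{J}_{\e,\theta,\kappa}(0)=0$. I would then invoke a Clark--Rabinowitz-type multiplicity theorem (\emph{cf.} Rabinowitz, CBMS Lectures, Theorem~9.12), which, for an even $C^{1}$ functional satisfying a local Palais--Smale condition below a threshold $c^{*}$ and admitting, for every $k$, a $k$-dimensional symmetric set on which it is strictly negative, produces a nondecreasing sequence
\[
c_{k}\;=\;\inf_{A\in\Sigma_{k}}\;\sup_{u\in A}\mathcal{J}_{\e,\theta,\kappa}(u),
\qquad \Sigma_{k}=\{A\subset X\setminus\{0\}:A\ \text{closed, symmetric},\ \mathrm{genus}(A)\ge k\},
\]
of critical values, each of which yields a pair of nontrivial solutions.

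The compactness part would follow the blueprint of Theorem~\ref{thm1}: using $(M_{2})$, $(L_{2})$ and $l<\ga^{-}$, any $(PS)_{c}$ sequence is shown to be bounded in $X$; the concentration--compactness principle, together with the Hardy--Littlewood--Sobolev inequality for $W^{1,\h}_{0}(\q)$ established earlier, eliminates concentration on the critical set $C=\{p^{*}=\ba_{1}\}$ and on the diagonal of the Choquard integral, provided $c<c^{*}$, where $c^{*}=c^{*}(\kappa)$ depends on $m_{0}$, the best constant for the embedding $W^{1,\h}_{0}(\q)\hookrightarrow L^{\A}(\q)$, and the parameters $l,r^{+},\ga^{-}$. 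The smallness restriction $0<\e<\e_{0}$ enters at this stage to ensure that the subcritical perturbation $\e\int g\ve{u}^{\ga}$ does not spoil the compactness threshold. For the genus geometry I would pick smooth, compactly supported, linearly independent functions spanning $X_{k}\subset X$ with $\dim X_{k}=k$; since all norms on $X_{k}$ are equivalent and $\ga^{-}<p^{-}$, on a sufficiently small sphere in $X_{k}$ the term $\e\int g\ve{u}^{\ga}$ of order $\|u\|^{\ga^{-}}$ dominates $\widehat{M}(\varphi_{\h}(\p{u}))$ of order at least $\|u\|^{p^{-}}$, making $\mathcal{J}_{\e,\theta,\kappa}<0$ on that sphere and thus $c_{k}<0$.

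The concluding calibration fixes the sequence $\{\theta_{n}\}$. On the fixed subspace $X_{n}$ the map $\theta\mapsto c_{n}(\theta)$ is continuous and nonincreasing, and a direct estimate shows $c_{n}(\theta)\to c_{n}(0)<c^{*}$ as $\theta\to 0^{+}$; therefore the set $\{\theta>0:\ c_{k}(\theta)<c^{*}\ \text{for all}\ k\le n\}$ is nonempty, and I would set
\[
\theta_{n}:=\sup\{\theta>0:\ c_{k}(\theta)<c^{*}\ \text{for all}\ k\le n\},
\]
passing to a strictly decreasing subsequence if necessary. For $\theta\in(\theta_{n+1},\theta_{n}]$ the values $c_{1},\dots,c_{n}$ all lie in $(-\infty,c^{*})$, so the genus theorem delivers $n$ critical levels below the compactness threshold and, by evenness of $\mathcal{J}_{\e,\theta,\kappa}$, at least $n$ pairs of nontrivial solutions of \eqref{model}. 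The principal obstacle is precisely the local Palais--Smale condition below $c^{*}$: the simultaneous presence of the Kirchhoff nonlocality, the critical multiphase term $B(x,u)$ concentrated on $C$, and the Choquard double integral forces one to intertwine the concentration--compactness principle for $W^{1,\h}_{0}(\q)$ with the Musielak--Orlicz Hardy--Littlewood--Sobolev inequality, and to coordinate $(M_{2})$ with $(L_{2})$ carefully, so that the critical and nonlocal interactions do not create mass transport beyond the subcritical regime guaranteed by $\e<\e_{0}$.
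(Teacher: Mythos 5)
Your plan has a genuine structural gap. You cite Rabinowitz's Theorem~9.12 (the symmetric mountain pass theorem), but the geometry you then set up is of \emph{Clark} type: you use the concave term $\e\int g\ve{u}^{\ga}$ with $\ga^-<p^-$ to make $J$ strictly negative on small spheres in $k$-dimensional subspaces, and you define the minimax levels over the free genus class $\Sigma_k=\{A\subset X\setminus\{0\}:\text{closed, symmetric, genus}\geq k\}$. These two ingredients are incompatible. Theorem~9.12 requires, as its local hypothesis $(S_1)$, that $J\geq\alpha>0$ on a small sphere intersected with a subspace of finite codimension, and its critical values satisfy $c_s\geq\alpha>0$; your construction produces $c_k<0$, so it is not an application of that theorem. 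Conversely, a Clark-type statement needs $J$ bounded below (or a truncation localizing to a small ball), because with your $\Sigma_k$ and a functional that tends to $-\infty$ along finite-dimensional rays — as $J$ does here, thanks to the $\theta$- and $\kappa$-terms — one has $c_k=\inf_{A\in\Sigma_k}\sup_A J=-\infty$ for every $k$. You never truncate, so the levels are not even well defined.

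This mismatch also breaks the $\theta$-calibration, which is the whole point of the theorem. The Palais--Smale threshold $c_*$ established in Lemma~\ref{PS} is strictly positive and depends on $\theta$ through $\min\bigl((m_0/\theta)^{b_1},(m_0/\theta)^{b_2}\bigr)$ (not on $\kappa$, as you wrote). If your levels $c_k$ were negative, they would automatically lie below $c_*$ for every $\theta>0$, giving infinitely many pairs for all $\theta$ and making the decreasing sequence $\{\theta_n\}$ superfluous; this should have been a warning sign. The paper instead uses the restricted minimax class $\Gamma_s$ of Lemma~\ref{fin} (odd deformations of finite-dimensional disks minus low-genus sets), verifies $(S_1)$ exactly as in Lemma~\ref{mpg1}$(i)$ — which is also where $\e_0$ enters, to keep $J$ positive on the small sphere, not to protect the compactness threshold — and verifies $(S_2)$ using the critical $\theta$-term on large spheres in finite-dimensional subspaces. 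It then constructs upper bounds $D_n$ on $c_n^\theta$ that are \emph{independent of $\theta$} (by dropping the favourable $-\theta\int\!\Hat{B}$ contribution) and chooses $\theta_n$ so small that $D_n<c_*(\theta_n)$, whence $0<c_1^\theta\leq\cdots\leq c_n^\theta<D_n<c_*(\theta)$ on the prescribed $\theta$-range. You would need to replace your genus set-up with this one, or else carry out a full truncation argument and explain how the $\theta$-dependence re-enters; as written, the proposal does not yield the stated result.
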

This paper is organized as follows. Section 2 focuses on proving the main tools, which include embedding results (Lemma \ref{lem1}, Lemma \ref{emb}, and Lemma \ref{embb}), the concentration compactness principle for the space \( W_0^{1,\h}(\Omega) \) (Theorem \ref{thm3}), and a variant of the HLSI for the space \( W_0^{1,\h}(\Omega) \) (Lemma \ref{HL}). Section 3 is dedicated to proving Theorem \ref{thm1}. Finally, the proof of Theorem \ref{thm2} is presented in Section 4.
\section{Functional Space setup}
In this section, we present some preliminary definitions and results. For a detailed study of Musielak-Orlicz spaces, we refer to \cite{Diening, Haito}. Throughout this article, we assume that the $(H_1)$ condition is satisfied.

Define $\psi:\q \times [0,\ \infty) \to  [0,\ \infty)$ as \[\psi(x,t)=b_1(x)t^{s_1(x)}+b_2(x)t^{s_2(x)}+b_3(x)t^{s_3(x)},\]
where $s_i \in C_+(\overline{\q})$ are such that $s_1(\cdot)<s_2(\cdot)<s_3(\cdot)$, $0<b_1(\cdot)\in L^{1}(\q)$ and $0\leq b_2(\cdot),b_3(\cdot)\in L^{1}(\q)$.
The function $\psi(x,t)$ is a generalised $N$ function and satisfying $\Delta_2$ condition
\[\psi(x,2t)\leq 2^{s_3^+} \psi(x,t).\]
The Musielak Lebesgue space $L^{\psi}(\q)$ is defined as
\[L^{\psi}(\q):=\left\{u: \q \to \R \ \text{is measurable function}\ : \int_{\q}\psi(x,\ve{u})dx< \infty \right\},\]
with the norm 
\[\mm{u}_{\psi}:=\inf\left\{\Lambda>0: \int_{\q}\psi\left(x,\left\lvert\frac{u}{\Lambda} \right\rvert\right)dx \leq 1  \right\}.\]
The space $L^{\psi}(\Omega)$ is a Banach space. The following lemma follows from \cite[Proposition 3.2]{mp1}.
\begin{lemma}\label{rel}
Let $\x_{\psi}(z)=\int_{\q}\psi(x,\ve{z})\,dx$. Then $\mm{z}_{\psi}$ and $\x_{\psi}(z)$ are related as follows:
\begin{itemize}
    \item [(i)] For $z \neq 0, \  \mm{z}_{\psi}=c \iff \x_{\psi}(z/c)=1,$
       \item[(ii)]$\mm{z}_{\psi}<1(=1,>1) \iff \rho_{\psi}(z)<1(=1,>1),$
        \item[(iii)]$\mm{z}_{\psi}<1,$ then $\mm{z}_{\psi}^{s_3^+}\leq \rho_{\psi}(z) \leq \mm{z}_{\psi}^{s_1^-},$
        \item[(iv)]$\mm{z}_{\psi}>1,$ then $\mm{z}_{\psi}^{s_1^-}\leq \rho_{\psi}(z) \leq \mm{z}_{\psi}^{s_3^+},$
        \item[(v)]$\mm{z}_{\psi} \to 0 \iff \rho_{\psi}(z) \to 0.$
\end{itemize}
\end{lemma}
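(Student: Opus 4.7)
The proof rests on the scaling property of the modular that arises from the sum-of-powers form of $\psi$. For any $\lambda>0$,
$$\psi(x,\lambda t)=\sum_{i=1}^{3}b_i(x)\,\lambda^{s_i(x)}t^{s_i(x)},$$
and since $s_1^-\le s_i(x)\le s_3^+$, monotonicity of $\lambda\mapsto\lambda^{s}$ in $s$ yields
$$\lambda^{s_1^-}\psi(x,t)\le \psi(x,\lambda t)\le \lambda^{s_3^+}\psi(x,t)\qquad(\lambda\ge 1),$$
with the inequalities reversed when $0<\lambda\le 1$. Integrating over $\q$ gives the analogous two-sided bound for $\rho_\psi$, and every item of the lemma will follow from this one scaling inequality.

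For $(i)$, I would first verify that $\Lambda\mapsto\rho_\psi(z/\Lambda)$ is continuous on $(0,\infty)$ by dominated convergence: the $\Delta_2$ estimate $\psi(x,2t)\le 2^{s_3^+}\psi(x,t)$ supplies an $L^1$ envelope valid on any compact subinterval. Combined with strict monotonicity in $\Lambda$ (when $z\neq 0$) and the limiting behaviour $\rho_\psi(z/\Lambda)\to\infty$ as $\Lambda\to 0^+$, $\to 0$ as $\Lambda\to\infty$, this forces the infimum defining $\mm{z}_\psi$ to be attained at the unique root $c$ of $\rho_\psi(z/c)=1$, giving $(i)$. Assertion $(ii)$ then drops out of $(i)$ together with the pointwise monotonicity of $\psi(x,\cdot)$: for example, if $c=\mm{z}_\psi<1$, then $|z|<|z|/c$, so $\rho_\psi(z)<\rho_\psi(z/c)=1$; the equality and the $>1$ cases are handled in exactly the same way.

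For $(iii)$ and $(iv)$ I would factor $|z|=c\,(|z|/c)$ with $c=\mm{z}_\psi$ and invoke the scaling bound with $\lambda=c$. In $(iii)$, $c<1$ gives
$$c^{s_3^+}\psi(x,|z|/c)\le \psi(x,|z|)\le c^{s_1^-}\psi(x,|z|/c),$$
so integration, together with the normalization $\rho_\psi(z/c)=1$ from $(i)$, yields $\mm{z}_\psi^{s_3^+}\le \rho_\psi(z)\le \mm{z}_\psi^{s_1^-}$; item $(iv)$ is the symmetric statement when $c>1$, where the roles of $s_1^-$ and $s_3^+$ swap. Finally $(v)$ is immediate from $(iii)$ and $(ii)$: $\mm{z_n}_\psi\to 0$ gives eventually $\mm{z_n}_\psi<1$ and $\rho_\psi(z_n)\le\mm{z_n}_\psi^{s_1^-}\to 0$, while $\rho_\psi(z_n)\to 0$ forces $\mm{z_n}_\psi<1$ eventually by $(ii)$, and then $\mm{z_n}_\psi\le\rho_\psi(z_n)^{1/s_3^+}\to 0$.

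The only point in this outline that is more than routine bookkeeping is the continuity/attainment step in $(i)$, which is precisely where the $\Delta_2$ condition becomes indispensable; once it is secured, every other item reduces to a direct application of the scaling inequality derived in the first paragraph.
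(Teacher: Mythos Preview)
Your argument is correct and self-contained. The paper itself does not give a proof of this lemma at all; it simply records that the statement follows from \cite[Proposition~3.2]{mp1} and moves on. What you have written is essentially the standard derivation of the norm--modular relations for a Musielak--Orlicz space whose generating function is a finite sum of weighted variable powers: the scaling inequality
\[
\min\{\lambda^{s_1^-},\lambda^{s_3^+}\}\,\psi(x,t)\le \psi(x,\lambda t)\le \max\{\lambda^{s_1^-},\lambda^{s_3^+}\}\,\psi(x,t)
\]
does all the work, and the $\Delta_2$ condition is exactly what makes the unit-ball property $\rho_\psi(z/\mm{z}_\psi)=1$ hold. So compared with the paper you are supplying the proof it omits; there is no methodological divergence to speak of, since the cited result in \cite{mp1} is established by the same mechanism you outline.
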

If $b_2,b_3\equiv 0$ and $b_1\equiv 1$, then the space $L^{\psi}(\q)$ reduces to the variable Lebesgue space $L^{p(x)}(\q)$. The space $L^{p(x)}$ endowed with the norm $\mm{\cdot}_{p(\cdot)}$. If $u\in L^{p(\cdot)}(\q)$ and  $v\in L^{p'(\cdot)}(\q),$ then $uv\in L^1(\q)$ and the generalised H\"older's inequality is given by  
\begin{equation}\label{25}
    \left|\int_{\q} u(x)v(x)dx\right|\leq \left(\frac{1}{p^-}+\frac{1}{(p')^-} \right) \mm{u}_{p(x)}\mm{v}_{p'(x)}\leq 2\mm{u}_{p(x)}\mm{v}_{p'(x)}, 
\end{equation}
 where $\frac{1}{p(x)}+\frac{1}{p'(x)}=1.$

The function $\h: \q \times [0 , \infty) \to  [0 , \infty)$ defined by $\h(x,t)=t^{p(x)}+a_1(x)t^{q(x)}+a_2(x)t^{r(x)},$ is a generalised $N$ function and satisfying $\Delta_2$ condition as follows:
\[\h(x,2t)\leq 2^{r^+} \h(x,t).\]
\begin{definition} \cite{mp1}
The Musielak-Lebesgue space $L^{\h}(\q)$ is defined as
 $$L^{\h}(\q):=\left\{u: \q \to \R \ \text{measurable function}\ : \int_{\q}\h(x,\ve{u})dx< \infty \right\}, $$
  with the norm 
 $$\mm{u}_{\h}:=\inf\left\{\Lambda>0: \int_{\q}\h\left(x,\left\lvert\frac{u}{\Lambda} \right\rvert\right)dx \leq 1  \right\}. $$    
\end{definition}
Next, we define the Musielak-Orlicz-Sobolev space $W^{1,\h}(\q)$
\[ W^{1,\h}(\q):=\{u\in L^{\h}(\q): \p{u} \in L^{\h}(\q)\}, \]
endowed with the norm 
\[\mm{u}_{1,\h}:=\mm{u}_{\h}+\m{u}_{\h}.\]
 $W_{0}^{1,\h}(\q)$ is defined as the completion of $C_c^{\infty}(\q).$
Moreover, if $(H_1)$ holds then there exists a constant $c>0$ such that $\mm{u}_{\h}\leq c \m{u}_{\h}\ \forall u\in W_{0}^{1,\h}(\q)$. So, we can equip the space $W_{0}^{1,\h}(\q)$ with the equivalent norm $\mm{u}=\m{u}_{\h}$.
  The spaces $L^{\h}(\q), W^{1,\h}(\q)$ and $W_0^{1,\h}(\q)$ are reflexive Banach spaces \cite[Proposition 3.1]{mp1}. Throughout this article, `$\hookrightarrow$' denotes continuous embedding, and `$\hookrightarrow \hookrightarrow$' denotes a compact embedding.
\begin{lemma} \cite[Proposition 3.3]{mp1}\label{em}
    The following embeddings hold:
    \begin{enumerate}[(i)]
        \item $W^{1,\h}(\q) \hookrightarrow L^{s(\cdot)}(\q)$ and $W_{0}^{1,\h}(\q) \hookrightarrow L^{s(\cdot)}(\q)$ for $s \in C(\overline{\q})$ with $1\leq s(x)\leq p^*(x)$ for $x\in \overline{\q}$.
        \item $W^{1,\h}(\q) \hookrightarrow \hookrightarrow L^{s(\cdot)}(\q)$ and $W_{0}^{1,\h}(\q) \hookrightarrow\hookrightarrow L^{s(\cdot)}(\q)$ for $s \in C(\overline{\q})$ with $1\leq s(x)<p^*(x)$ for $x\in \overline{\q}$.
        \end{enumerate}
\end{lemma}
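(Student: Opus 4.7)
The plan is to reduce the claim to the Sobolev embedding for the variable exponent Lebesgue--Sobolev space $W^{1,p(\cdot)}(\q)$, which is available under the standing assumption $(H_1)$. The key observation is that among the three power functions in $\h$, the smallest-exponent piece $t^{p(x)}$ is never switched off, so the Musielak norm always dominates the variable exponent $L^{p(\cdot)}$ norm, and the rest is classical.

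First I would establish the continuous inclusion $L^{\h}(\q) \hookrightarrow L^{p(\cdot)}(\q)$. Since $a_1, a_2 \geq 0$ in $\q$, we have the pointwise domination
\[
|u(x)|^{p(x)} \;\leq\; |u(x)|^{p(x)} + a_1(x)|u(x)|^{q(x)} + a_2(x)|u(x)|^{r(x)} \;=\; \h(x,|u(x)|),
\]
so $\rho_{p(\cdot)}(u/\Lambda) \leq \rho_{\h}(u/\Lambda)$ for every $\Lambda>0$. Hence any $\Lambda$ admissible for the Luxemburg norm of $u$ in $L^{\h}(\q)$ is admissible in $L^{p(\cdot)}(\q)$, which yields $\mm{u}_{p(\cdot)} \leq \mm{u}_{\h}$. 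Applying the same comparison to $\nabla u$ we obtain $\m{u}_{p(\cdot)} \leq \m{u}_{\h}$, so the identity map produces continuous inclusions
\[
W^{1,\h}(\q) \hookrightarrow W^{1,p(\cdot)}(\q), \qquad W^{1,\h}_{0}(\q) \hookrightarrow W^{1,p(\cdot)}_{0}(\q).
\]

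Next I would invoke the classical variable exponent Sobolev embedding (see \cite{Diening}): under $(H_1)$, i.e.\ $p \in C_+(\overline{\q})$ with $p(x) < N$ on the bounded smooth domain $\q$, for every $s \in C(\overline{\q})$ with $1 \leq s(x) \leq p^*(x)$ one has $W^{1,p(\cdot)}(\q) \hookrightarrow L^{s(\cdot)}(\q)$, and the embedding is compact whenever $s(x) < p^*(x)$ on $\overline{\q}$. Composing this with the step above yields simultaneously (i) and (ii), both for $W^{1,\h}(\q)$ and, via the Poincaré inequality available under $(H_1)$, for $W^{1,\h}_{0}(\q)$.

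I expect the main obstacle to be the variable exponent Sobolev step itself, which in full generality requires a log-Hölder modulus of continuity on the exponent. Fortunately, on a bounded $\q$ with $p \in C(\overline{\q})$ the Fan--Zhao/Kov\'a\v{c}ik--R\'akosn\'ik form of the inequality suffices, so no regularity of $p$ beyond $(H_1)$ enters this lemma; the rest of the argument is a routine modular comparison that does not interact with the anisotropic pieces carried by $a_1$ and $a_2$.
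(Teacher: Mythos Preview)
The paper does not supply its own proof of this lemma; it simply quotes the result from \cite[Proposition~3.3]{mp1}. Your reduction is exactly the expected one and almost certainly coincides with the argument in the cited reference: the pointwise inequality $t^{p(x)}\leq \h(x,t)$ (valid because $a_1,a_2\geq 0$) immediately gives $W^{1,\h}(\q)\hookrightarrow W^{1,p(\cdot)}(\q)$, and the variable exponent Sobolev theorem for $W^{1,p(\cdot)}$ finishes the job.

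One caveat worth recording: your closing claim that mere continuity of $p$ on $\overline{\q}$ already delivers the \emph{critical} embedding $W^{1,p(\cdot)}(\q)\hookrightarrow L^{p^*(\cdot)}(\q)$ is more delicate than you indicate. In the Fan--Shen--Zhao/Diening framework the sharp critical case generally requires a log-H\"older modulus on $p$, not just continuity; the subcritical compact embedding in~(ii) is the part that goes through with $p\in C(\overline{\q})$ alone. The clean fix is to invoke the Lipschitz regularity $p\in C^{0,1}(\overline{\q})$ available from $(H_2)$, which the paper assumes for all of its later embedding machinery (Lemmas~\ref{lem1}--\ref{embb}, Theorem~\ref{thm3}) and which comfortably implies log-H\"older continuity. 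With that adjustment your argument is complete.
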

\begin{lemma} \label{g}
If $(H_1),(H_3)$ hold, then $W^{1,\h}(\q)\hookrightarrow\hookrightarrow L_{g(x)}^{\ga(x)}(\q)$.
\end{lemma}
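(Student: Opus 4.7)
The plan is to reduce the claim to the compact embedding $W^{1,\h}(\q)\hookrightarrow\hookrightarrow L^{\ga(x)\xi'(x)}(\q)$ supplied by Lemma \ref{em}, where $\xi'(x)=\xi(x)/(\xi(x)-1)$ denotes the pointwise conjugate exponent of $\xi(x)$. The hypothesis $(H_3)$ is precisely tailored so that $\ga(x)\xi'(x)<p^*(x)$ on $\overline{\q}$.

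First I would apply the generalized H\"older inequality \eqref{25} with the pair $(\xi(x),\xi'(x))$ to obtain
\[
\int_{\q} g(x)\ve{u}^{\ga(x)}\,dx\le 2\,\mm{g}_{\xi(x)}\,\bigl\|\ve{u}^{\ga(x)}\bigr\|_{\xi'(x)}.
\]
Since $\int_{\q}\ve{u}^{\ga(x)\xi'(x)}\,dx$ is exactly the modular of $\ve{u}^{\ga(x)}$ in $L^{\xi'(x)}(\q)$, Lemma \ref{rel} (applied in the unweighted case $b_1\equiv 1$, $b_2\equiv b_3\equiv 0$) bounds $\bigl\|\ve{u}^{\ga(x)}\bigr\|_{\xi'(x)}$ by a power of $\mm{u}_{\ga(x)\xi'(x)}$. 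By $(H_3)$ and Lemma \ref{em}(i), $W^{1,\h}(\q)\hookrightarrow L^{\ga(x)\xi'(x)}(\q)$, which produces a uniform control on the right-hand side and therefore yields the continuous embedding $W^{1,\h}(\q)\hookrightarrow L^{\ga(x)}_{g(x)}(\q)$.

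To upgrade to a compact embedding, I would take a bounded sequence $\{u_n\}\subset W^{1,\h}(\q)$ and, using Lemma \ref{em}(ii), extract a subsequence with $u_n\to u$ strongly in $L^{\ga(x)\xi'(x)}(\q)$. Repeating the H\"older estimate with $u_n-u$ in place of $u$ and invoking Lemma \ref{rel}(v) give
\[
\int_{\q} g(x)\ve{u_n-u}^{\ga(x)}\,dx\longrightarrow 0.
\]
Since $g\in L^{\xi(x)}(\q)\subset L^1(\q)$ (as $\q$ is bounded and $\xi^-\ge 1$) and $1<\ga^-\le \ga^+<\infty$, the function $\psi(x,t)=g(x)t^{\ga(x)}$ fits the framework of Lemma \ref{rel} and satisfies the $\Delta_2$ condition $\psi(x,2t)\le 2^{\ga^+}\psi(x,t)$. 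Hence the modular convergence above is equivalent to norm convergence in $L^{\ga(x)}_{g(x)}(\q)$, yielding compactness.

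The main obstacle I anticipate is a careful verification that the weighted modular $\int_{\q} g(x)\ve{u}^{\ga(x)}\,dx$ fits the Musielak--Orlicz framework of Lemma \ref{rel}, so that modular convergence translates into norm convergence in $L^{\ga(x)}_{g(x)}(\q)$; this is exactly where the integrability $g\in L^{\xi(x)}(\q)$, and the implied $g\in L^1(\q)$, is used as a $b_1$-type weight. Everything else is bookkeeping with pointwise conjugate exponents and the monotonicity of variable-exponent modulars.
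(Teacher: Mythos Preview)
Your proposal is correct and follows essentially the same route as the paper, which merely records that the lemma is a consequence of Lemma~\ref{em} together with \cite[Theorem 2.7]{Nehari}; that cited result is exactly the weighted variable-exponent embedding obtained via the H\"older splitting with exponents $\xi(\cdot),\xi'(\cdot)$ and the subcritical compact embedding $W^{1,\h}(\q)\hookrightarrow\hookrightarrow L^{\ga(\cdot)\xi'(\cdot)}(\q)$, which is precisely what you have written out in detail.
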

The proof of this above lemma follows from Lemma \ref{em} and \cite[Theorem 2.7]{Nehari}.
\begin{lemma}\cite[Proposition 4.5]{mp1}\label{S}
Let $\op: W^{1,\h}(\q) \to W^{1,\h}(\q)^*$ be defined as
\be
\langle\op(u),v\rangle=\int_{\q}\left(\p{u}^{p(x)-2}+a_1(x)\p{u}^{q(x)-2}+a_2(x)\p{u}^{r(x)-2}\right)\nabla u\nabla v \,dx.
\ee
Then  $\op$ is bounded, continuous, and of $S_+$ type.
\end{lemma}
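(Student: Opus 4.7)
The plan is to verify the three properties in turn: boundedness, continuity, and the $(S_+)$ condition. Boundedness follows from the generalized Hölder inequality \eqref{25} applied in each of the three pieces. Since for each exponent $s\in\{p,q,r\}$ one has $|\xi|^{s(x)-1}\in L^{s'(\cdot)}$ whenever $|\xi|^{s(x)}\in L^{1}$, the pointwise estimate
\[|\langle\op(u),v\rangle|\le\int_{\q}|\nabla u|^{p(x)-1}|\nabla v|+a_1(x)|\nabla u|^{q(x)-1}|\nabla v|+a_2(x)|\nabla u|^{r(x)-1}|\nabla v|\,dx\]
combined with Lemma \ref{rel} yields $\|\op(u)\|_{*}\le C(1+\mm{u}_{1,\h}^{r^{+}-1})$, so $\op$ sends bounded sets to bounded sets.

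For continuity, suppose $u_n\to u$ in $W^{1,\h}(\q)$. Passing to an arbitrary subsequence, extract a further subsequence along which $\nabla u_n\to\nabla u$ a.e.\ in $\q$. Then the Nemytskii-type maps $u\mapsto |\nabla u|^{s(x)-2}\nabla u$ (for $s=p,q,r$) converge a.e., and the $\Delta_2$ property of $\h$ together with norm-modular convergence produces $L^{1}$-dominating envelopes, so Vitali's convergence theorem gives $\op(u_n)\to\op(u)$ in $W^{1,\h}(\q)^{*}$ along the subsequence; the subsequence principle upgrades this to the full sequence.

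The $(S_+)$ property is the main step. Assume $u_n\rightharpoonup u$ in $W^{1,\h}(\q)$ and $\limsup_{n\to\infty}\langle\op(u_n),u_n-u\rangle\le 0$. By weak convergence $\langle\op(u),u_n-u\rangle\to 0$, and by the pointwise monotonicity
\[\bigl(|\xi|^{s(x)-2}\xi-|\eta|^{s(x)-2}\eta\bigr)\cdot(\xi-\eta)\ge 0\]
applied with $s\in\{p,q,r\}$ and weights $1,a_1(x),a_2(x)\ge 0$, we get $\langle\op(u_n)-\op(u),u_n-u\rangle\ge 0$. Hence the latter quantity tends to $0$. Since it is the sum of three non-negative integrals, each converges to $0$ in $L^{1}(\q)$, which by the standard Simon-type inequalities forces $\nabla u_n\to\nabla u$ a.e.\ in $\q$ (up to a subsequence).

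The final and most delicate step is to promote this a.e.\ convergence of gradients to convergence in the Musielak-Orlicz modular $\x_{\h}(\nabla u_n-\nabla u)\to 0$, from which $\mm{u_n-u}\to 0$ follows by Lemma \ref{rel}. The hard part here is equi-integrability in the presence of three variable exponents with non-negative weights $a_1,a_2$; I would obtain it from the modular convergence $\int_{\q}\bigl(|\nabla u_n|^{p(x)-2}\nabla u_n-|\nabla u|^{p(x)-2}\nabla u\bigr)\cdot(\nabla u_n-\nabla u)\,dx\to 0$ and its analogues for $q,r$, which after expansion and use of weak convergence of $|\nabla u_n|^{s(x)-2}\nabla u_n$ in the appropriate dual Musielak spaces yield $\x_{\h}(\nabla u_n)\to\x_{\h}(\nabla u)$; combined with $\nabla u_n\to\nabla u$ a.e.\ and the Brezis-Lieb lemma adapted to Musielak-Orlicz spaces (valid here thanks to the $\Delta_2$ condition on $\h$), this gives $\x_{\h}(\nabla u_n-\nabla u)\to 0$, completing the $(S_+)$ argument.
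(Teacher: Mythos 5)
The paper does not prove this lemma at all: it is stated with the citation \cite[Proposition 4.5]{mp1} (Dai--Vetro) and no argument is reproduced, so there is no in-paper proof against which to compare your sketch. That said, your outline is the standard Leray--Lions/Browder argument for operators of this type in Musielak--Orlicz spaces, and it matches the structure of the proof in the cited reference and in the double-phase precursors (e.g.\ Crespo-Blanco et al.\ \cite{vdp2} and Ho--Winkert \cite{vdp6}): boundedness via H\"older and Lemma \ref{rel}, continuity via a.e.\ convergence along a subsequence plus Vitali/dominated convergence and the subsequence principle, and $(S_+)$ via monotonicity, Simon inequalities, a.e.\ gradient convergence, and a Brezis--Lieb step to upgrade a.e.\ convergence to modular convergence.

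Two places where your sketch would need to be tightened if written in full. First, in the boundedness estimate the correct bound from Lemma \ref{rel} is of the form $\max\bigl(\mm{u}_{1,\h}^{p^{-}-1},\mm{u}_{1,\h}^{r^{+}-1}\bigr)$ rather than $1+\mm{u}_{1,\h}^{r^{+}-1}$, because the exponent depends on whether the relevant norm is below or above $1$; this is harmless but should be stated precisely. Second, in the $(S_+)$ step the phrase ``weak convergence of $|\nabla u_n|^{s(x)-2}\nabla u_n$ in the appropriate dual Musielak spaces'' hides the real work: for $s=q,r$ you must work in weighted variable-exponent spaces carrying the weights $a_1,a_2$, identify the weak limit with $a_i(x)|\nabla u|^{s(x)-2}\nabla u$ using the a.e.\ convergence plus boundedness (the standard a.e.-plus-bounded-implies-weak-limit lemma), and then run Brezis--Lieb separately on each of the three weighted modulars. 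Also note that when $1<p(x)<2$ the Simon inequality yields only $(|\nabla u_n|+|\nabla u|)^{p(x)-2}|\nabla u_n-\nabla u|^{2}$ on the right, so passing from the vanishing monotone integral to $\int_\q|\nabla u_n-\nabla u|^{p(x)}\,dx\to 0$ on the region $\{p(x)<2\}$ requires an additional H\"older argument; your sketch treats the two exponent regimes uniformly. None of these are conceptual gaps, but they are the points a referee would press on.
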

 \begin{definition} 
Let $\mathcal{T}^{-1}(x, \cdot) : [0, \infty) \to [0, \infty)$ for all $x \in \overline{\Omega}$ be the inverse function of $\mathcal{T}(x, \cdot)$. Define $\mathcal{T}_{*}^{-1} : \Omega \times [0, \infty) \to [0, \infty)$ by 
\begin{equation} 
\mathcal{T}_{*}^{-1}(x,s) = \int_0^s \frac{\mathcal{T}^{-1} \left( x, \tau\right)}{\tau^{\frac{N+1}{N}} } \, d\tau \quad \text{for all } (x,s) \in \overline{\Omega} \times [0, \infty),
\end{equation}
where $\mathcal{T}_{*} :(x,t) \in \overline{\Omega} \times [0, \infty) \mapsto s \in [0, \infty)$ is such that $\mathcal{T}_{*}^{-1}(x,s) = t$. The function $\mathcal{T}_{*}$ is called the Sobolev conjugate function of $\mathcal{T}$.
\end{definition} 
\begin{lemma} \label{lem1}
Let $(H_2)$ hold. Then 
 \begin{enumerate}[(i)]
 \item The embedding $W^{1,\h}(\q)\hookrightarrow L^{\h_*}(\q)$ holds.
 \item For $\psi:\q \times [0 , \infty) \to  [0 , \infty)$ with $\psi \in N(\q)$ and $\psi<<\h_*,$  $W^{1,\h}(\q)\hookrightarrow\hookrightarrow L^{\psi}(\q)$ holds.
 \end{enumerate}
\end{lemma}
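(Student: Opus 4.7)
Part (i) of the plan is to construct the Sobolev conjugate $\h_*$ of the multiphase generalized $N$-function $\h(x,t)=t^{p(x)}+a_1(x)t^{q(x)}+a_2(x)t^{r(x)}$ explicitly, and then invoke the abstract Sobolev embedding framework for Musielak-Orlicz-Sobolev spaces, adapting the double-phase argument of \cite{vdp2}. The key structural hypothesis from $(H_2)$ is the upper bound $r^+/p^-<1+1/N$, which guarantees convergence at infinity of the integral defining $\h_*^{-1}$ and hence that $\h_*$ is itself a well-defined generalized $N$-function. The $C^{0,1}(\overline{\q})$-regularity of the exponents $p,q,r$ and the weights $a_1,a_2$ produces the log-H\"older type modulus of continuity required to obtain estimates that are uniform in $x\in\overline{\q}$.

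Concretely, the first step is to obtain two-sided bounds on $\h^{-1}(x,\tau)$ by comparison with the inverses of the three individual power terms. Substituting these bounds into the integral formula defining $\h_*^{-1}(x,s)$ and carrying out elementary computations (using $p(x)<q(x)<r(x)<N$ and the gap hypothesis $r^+/p^-<1+1/N$), one arrives at pointwise equivalences of the form
\[\h_*(x,t)\ \asymp\ t^{p^*(x)}+a_1(x)^{q^*(x)/q(x)}\,t^{q^*(x)}+a_2(x)^{r^*(x)/r(x)}\,t^{r^*(x)},\]
with constants independent of $x$. The second step is to establish the embedding modularly: for $u\in C_c^{\infty}(\q)$ one applies the classical variable-exponent Sobolev inequality to each power-type layer and, using the equivalence above, derives an estimate of the form $\int_{\q}\h_*(x,|u|/C)\,dx\leq\int_{\q}\h(x,|\nabla u|)\,dx$ for some $C>0$. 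Density of $C_c^{\infty}(\q)$ in $W_0^{1,\h}(\q)$ together with standard truncation and extension arguments (as in \cite{vdp2}) then promote this inequality to the whole space $W^{1,\h}(\q)$, giving the continuous embedding $W^{1,\h}(\q)\hookrightarrow L^{\h_*}(\q)$.

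For (ii), the compactness is a standard consequence of (i) once combined with the hypothesis $\psi\ll\h_*$. Take a bounded sequence $\{u_n\}$ in $W^{1,\h}(\q)$; by Lemma \ref{em}(ii) a subsequence converges in $L^{s(\cdot)}(\q)$ for some $s$ with $s(x)<p^*(x)$, in particular almost everywhere, to a limit $u\in W^{1,\h}(\q)$. Part (i) yields a uniform bound on $\x_{\h_*}(u_n-u)$. The condition $\psi\ll\h_*$ means $\psi(x,\e t)/\h_*(x,t)\to 0$ as $t\to\infty$, uniformly in $x\in\overline{\q}$, for every $\e>0$; combined with the $\Delta_2$ property of $\psi$ and the uniform $\h_*$-modular bound, this produces equi-integrability of $\{\psi(\cdot,|u_n-u|)\}$. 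Vitali's convergence theorem then delivers $\x_\psi(u_n-u)\to 0$, and Lemma \ref{rel}(v) upgrades this to the norm convergence $\mm{u_n-u}_\psi\to 0$.

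The principal obstacle is expected to lie in the first step of (i): establishing the two-sided equivalence for $\h_*$ with constants that are uniform in $x$. In the double-phase setting of \cite{vdp2} there is only a single weighted term beyond the base power, so the regions where each term dominates are easy to disentangle; the three-term multiphase structure requires a more delicate case analysis, and the intermediate exponent $q(x)$ and weight $a_1(x)$ must be handled without losing uniformity. Extra care will be needed on the subsets where $a_1$ or $a_2$ vanish, where the corresponding contribution to $\h_*$ must be interpreted via a limiting convention; the Lipschitz regularity of the coefficients assumed in $(H_2)$ is essential at precisely this point.
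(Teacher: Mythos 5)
Your route is genuinely different from the paper's, and it has substantive gaps. The paper's proof does not compute $\h_*$ explicitly at all: it cites Fan's abstract embedding theorem for Musielak--Sobolev spaces (\cite[Theorems 1.1 and 1.2]{Musielak}) and then verifies its two hypotheses directly, namely \eqref{2.2} (that $\h_*^{-1}(x,t)\to\infty$ as $t\to\infty$, proved by bounding $\h(x,t)\leq(1+\|a_1\|_\infty+\|a_2\|_\infty)t^{r^+}$, hence $\h^{-1}(x,s)\geq c\,s^{1/r^+}$, and integrating to get $\h_*^{-1}(x,t)\geq c\,(t^{1/r^+-1/N}-1)$) and \eqref{2.3} (a Lipschitz-type bound $|\partial_{x_j}\h(x,t)|\leq C\,\h(x,t)^{1+\delta}$ with $\delta<1/N$, obtained from the Lipschitz regularity of $p,q,r,a_1,a_2$ and the gap condition $r^+/p^-<1+1/N$, which allows choosing $\zeta>0$ small with $\frac{r^++\zeta}{p^-}<1+\frac1N$). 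Both parts (i) and (ii) of the lemma then follow in one stroke from the quoted theorems. Your attribution of an explicit conjugate-function computation to \cite{vdp2} is inaccurate: there too the Sobolev embedding for the double-phase Musielak space is obtained by checking the hypotheses of an abstract theorem, not by a pointwise identification of $\h_*$.

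Your proposed argument has two concrete gaps. First, the central claim that $\h_*(x,t)\asymp t^{p^*(x)}+a_1(x)^{q^*(x)/q(x)}t^{q^*(x)}+a_2(x)^{r^*(x)/r(x)}t^{r^*(x)}$ with constants uniform in $x$ is asserted but not established, and you yourself flag it as the ``principal obstacle'' without resolving it; the delicate issue near $\{a_1=0\}\cup\{a_2=0\}$, where the inverse-sum structure of $\h^{-1}$ degenerates and the transition regions move, is precisely what would have to be controlled. The paper sidesteps this entirely: note that the paper keeps $L^{\h_*}$ (Lemma \ref{lem1}) and $L^{\A}$ (Lemma \ref{emb}) as distinct targets with separate proofs, the latter cited from \cite{vdp6}, rather than identifying them. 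Second, even granting the equivalence, ``applying the classical variable-exponent Sobolev inequality to each power-type layer'' does not yield the modular inequality $\int_\q\h_*(x,|u|/C)\,dx\leq\int_\q\h(x,|\nabla u|)\,dx$. The weighted layers $a_1(x)|\nabla u|^{q(x)}$ and $a_2(x)|\nabla u|^{r(x)}$ are not unweighted variable-exponent Sobolev moduli, and passing from $a_1(x)|\nabla u|^{q(x)}$ to $a_1(x)^{q^*(x)/q(x)}|u|^{q^*(x)}$ is exactly the (nontrivial) content of the weighted embedding in \cite{vdp6}, which you are implicitly assuming rather than proving.

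Your sketch of part (ii)---uniform $\h_*$-modular bound from (i), a.e.\ convergence from Lemma \ref{em}(ii), equi-integrability from $\psi\ll\h_*$ and $\Delta_2$, then Vitali plus Lemma \ref{rel}(v)---is a correct reconstruction of what Fan's Theorem~1.2 packages; the paper simply cites it rather than re-deriving it.
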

\begin{proof}The proof follows from \cite[Theorem 1.1, Theorem 1.2]{Musielak}.
It is enough to show that the following equations \eqref{2.2} and \eqref{2.3} hold. 
\be \label{2.2}
\lim_{t \to \infty} \h_*^{-1}(x,t)=\infty \ \forall x\in \q.
\ee
There exists $\delta <\frac{1}{N}, C, t_0$ such that 
\be \label{2.3}
\left \lvert \frac{\partial \h(x,t)}{\partial x_j}\right \rvert \leq C(\h(x,t))^{1+\delta} \forall x\in \q \ \text{and}\ t\geq t_0,
\ee
 whenever $\nabla a_1(x), \nabla a_2(x), \nabla p(x),\nabla q(x)$ and $\nabla r(x)$ exist.

Firstly, let us prove \eqref{2.2}. For $t>1$, we have 
\begin{align*}
  \h(x,t) &\leq \left(1+\mm{a_1}_{\infty}+\mm{a_2}_{\infty}\right)t^{r^+}\\
  \h^{-1}(x,t)&\geq \left(1+\mm{a_1}_{\infty}+\mm{a_2}_{\infty}\right)^{\frac{-1}{r^+}}t^{\frac{1}{r^+}}\\
  \h_*^{-1}(x,t)&\geq \int_{1}^{t}\frac{\h^{-1}(x,s)}{{s}^{\frac{N+1}{N}}}\,ds\\
  &\geq \left(1+\mm{a_1}_{\infty}+\mm{a_2}_{\infty}\right)^{\frac{-1}{r^+}}\int_{1}^{t} s^{\frac{1}{r^+}-\frac{N+1}{N}}\,ds\\
  &= c\left(t^{\frac{1}{r^+}-\frac{1}{N}}-1\right) \to \infty  \ \text{as} \ t \to \infty.
\end{align*}
Next, we claim \eqref{2.3}. Let $c_s$ be the Lipschitz coefficient for the Lipschitz continuous function $s$. 
From $\frac{r^+}{p^-}<1+\frac{1}{N},$ there exists $\zeta>0$ small enough such that 
\[
\frac{r^++\zeta}{p^-}<1+\frac{1}{N}
\]
and $\ln{t}\leq c t^{\zeta}$. For $t\geq 1$ we have 
\begin{align*}
   \left \lvert \frac{\partial \h(x,t)}{\partial x_j}\right \rvert  &\leq c_p t^{p(x)} \ln{t}+a_1(x)c_q t^{q(x)} \ln{t}+c_{a_1} t^{q(x)}+a_2(x)c_r t^{r(x)} \ln{t}+c_{a_2} t^{r(x)}\\
   &\leq  c_p c t^{p(x)+\zeta} +a_1(x)c_q c t^{q(x)+\zeta} +c_{a_1} t^{q(x)}+a_2(x)c_r c t^{r(x)+\zeta}+c_{a_2} t^{r(x)}\\
   &\leq \left(c_p c+\mm{a_1}_{\infty}c c_q+c_{a_1}+\mm{a_2}_{\infty}c c_r+c_{a_2}\right)t^{r(x)+\zeta}\\
   &\leq \left(c_p c+\mm{a_1}_{\infty}c c_q+c_{a_1}+\mm{a_2}_{\infty}c c_r+c_{a_2}\right) \left(\h(x,t)\right)^{\frac{r^+ +\zeta}{p^-}}
\end{align*}
Thus, \eqref{2.3} holds with $C=c_p c+\mm{a_1}_{\infty}c c_q+c_{a_1}+\mm{a_2}_{\infty}c c_r+c_{a_2}$, $t_0=1$ and $\delta=\frac{r^+ +\zeta}{p^-}-1$.
\end{proof}
The proof of the following two lemmas follows from \cite[Proposition 3.4, Proposition 3.7]{vdp6}.
\begin{lemma}\label{emb}
The embedding $W^{1,\h}(\q)\hookrightarrow L^{\A}(\q)$ holds, where 
$\A(x,t)=t^{p^*(x)}+a_1(x)^{\frac{q^*(x)}{q(x)}}t^{q^*(x)}+a_2(x))^{\frac{r^*(x)}{r(x)}}t^{r^*(x)}$ for $x\in \overline{\q}$ and $t\in[0,\infty)$.
\end{lemma}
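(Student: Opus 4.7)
Plan: I would show that for every $u \in W^{1,\h}(\q)$, the modular
\[
\rho_{\A}(u) = \int_\q |u|^{p^*(x)}\, dx + \int_\q a_1(x)^{q^*(x)/q(x)} |u|^{q^*(x)}\, dx + \int_\q a_2(x)^{r^*(x)/r(x)} |u|^{r^*(x)}\, dx
\]
is finite and controlled by $\mm{u}_{1,\h}$; the continuity of the embedding then follows from the standard modular/Luxemburg-norm equivalence (the $\A$-analog of Lemma \ref{rel}). The first summand is immediate: Lemma \ref{em}(i) applied with $s = p^*$ gives $W^{1,\h}(\q) \hookrightarrow L^{p^*(x)}(\q)$, which controls $\int_\q |u|^{p^*(x)}\, dx$ in terms of $\mm{u}_{1,\h}$.

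For the second summand, I would introduce the auxiliary function $v_1(x) = a_1(x)^{1/q(x)} u(x)$, so that $a_1(x)^{q^*(x)/q(x)} |u|^{q^*(x)} = |v_1(x)|^{q^*(x)}$. The goal then becomes showing $v_1 \in L^{q^*(x)}(\q)$ with norm bounded by $\mm{u}_{1,\h}$. The natural route is to prove $v_1 \in W^{1, q(x)}(\q)$ and then invoke the classical variable-exponent Sobolev embedding $W^{1, q(x)}(\q) \hookrightarrow L^{q^*(x)}(\q)$, which is valid since $q \in C^{0,1}(\overline{\q})$ with $q^+ < N$ by $(H_2)$. The $L^{q(x)}$-bound on $v_1$ itself follows from Lemma \ref{em}(i) together with $a_1 \in L^\infty(\q)$. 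For the gradient, write
\[
\nabla v_1 = a_1(x)^{1/q(x)} \nabla u + u(x)\, \nabla\bigl(a_1(x)^{1/q(x)}\bigr);
\]
the first piece is harmless, since its $q(x)$-th power equals $a_1(x) |\nabla u|^{q(x)}$, which is in $L^1(\q)$ by the very definition of $W^{1,\h}(\q)$. The third summand of $\rho_{\A}(u)$ is handled symmetrically via $v_2(x) = a_2(x)^{1/r(x)} u(x)$ and the Sobolev embedding for $W^{1,r(x)}(\q)$.

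The main obstacle is the second piece of $\nabla v_1$, because the formal derivative
\[
\nabla\bigl(a_1(x)^{1/q(x)}\bigr) = \frac{1}{q(x)}\, a_1(x)^{\frac{1}{q(x)} - 1} \nabla a_1 - \frac{\ln a_1(x)}{q(x)^2}\, a_1(x)^{1/q(x)} \nabla q
\]
is potentially singular on the zero set of $a_1$. Here the hypotheses of $(H_2)$ --- namely the Lipschitz regularity of $a_1, p, q, r$ and the arithmetic inequality $\frac{r^+}{p^-} < 1 + \frac{1}{N}$ --- play the decisive role: they ensure that the singular factor $a_1^{1/q(x)-1}|\nabla a_1|$ can be absorbed via a H\"older-type estimate using the $L^{p^*(x)}$-control on $u$ obtained in the first step, so that $u\, \nabla(a_1^{1/q(x)}) \in L^{q(x)}(\q)$. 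Once this delicate bound is in place, $v_1 \in W^{1,q(x)}(\q) \hookrightarrow L^{q^*(x)}(\q)$, the symmetric estimate gives $v_2 \in L^{r^*(x)}(\q)$, and summing the three contributions yields $\rho_{\A}(u) \leq F(\mm{u}_{1,\h})$ for a suitable continuous function $F$, which closes the argument.
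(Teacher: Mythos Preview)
Your plan has a genuine gap at exactly the point you flag as the ``main obstacle'': the claim that $(H_2)$ lets you absorb the singular factor $a_1^{1/q(x)-1}|\nabla a_1|$ and conclude $v_1 := a_1^{1/q(\cdot)} u \in W^{1,q(\cdot)}(\q)$ is false in general. Take $N=4$, constant exponents $p=1.8$, $q=2$, $r=2.1$, weights $a_1(x) = x_1^+$ and $a_2 \equiv 1$, and $u \equiv 1$ on the unit ball; then $(H_1)$--$(H_2)$ hold, in particular $r^+/p^- = 7/6 < 5/4 = 1 + 1/N$. Here $v_1(x) = (x_1^+)^{1/2}$, so $|\nabla v_1|^{q} = \tfrac{1}{4}x_1^{-1}$ on $\{x_1>0\}$, which is not integrable near $\{x_1=0\}$. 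Thus $v_1 \notin W^{1,q}(\q)$, even though the target integrand $a_1^{q^*/q}|u|^{q^*} = (x_1^+)^{2}$ is perfectly integrable. No H\"older splitting against $\|u\|_{p^*(\cdot)}$ rescues this: with $u\equiv 1$ that step is vacuous, and one is left needing $\int a_1^{\,1-q}|\nabla a_1|^q\,dx < \infty$, which forces $q<2$---a constraint $(H_2)$ nowhere imposes. The same obstruction hits $v_2 = a_2^{1/r(\cdot)} u$. So the intermediate membership $v_1\in W^{1,q(\cdot)}$ on which your argument rests simply need not hold, and the proposed ``absorption'' does not close.

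The paper gives no argument of its own here but defers to \cite{vdp6}, and the route there avoids differentiating the weights altogether. One first invokes Lemma~\ref{lem1} to get $W^{1,\h}(\q) \hookrightarrow L^{\h_*}(\q)$ for the abstract Sobolev conjugate $\h_*$; this is where the Lipschitz regularity of $a_i,p,q,r$ and the bound $r^+/p^- < 1 + 1/N$ are actually consumed, via the estimate~\eqref{2.3}. One then checks, by analysing $\h^{-1}$ and hence $\h_*^{-1}$, that $\A(x,t)$ is pointwise dominated by $\h_*(x,Ct)$ up to an integrable error, giving $L^{\h_*}(\q) \hookrightarrow L^{\A}(\q)$. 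The composition yields the lemma without ever forming a weak derivative of $a_i^{1/q(\cdot)}$.
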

\begin{lemma}\label{embb}
    Define $\Z(x,t)=t^{\al_1(x)}+a_1(x)^{\frac{\al_2(x)}{q(x)}}t^{\al_2(x)}+a_2(x))^{\frac{\al_3(x)}{r(x)}}t^{\al_3(x)}\ \forall (x,t)\in \overline{\q} \times [0,\infty)$
    where $ \al_1,\al_2,\al_3 \in C_+(\overline{\q})$ are such that  $\al_1\leq p^*(x),\ \al_2(x)\leq q^*(x)$ and  $\al_3(x)\leq r^*(x)$ for $x\in \overline{\q}$. Then $W^{1,\h}(\q)\hookrightarrow L^{\Z}(\q)$. Moreover, the embedding is compact provided  $\al_1< p^*(x),\ \al_2(x)< q^*(x)$ and  $\al_3(x)< r^*(x)$. 
\end{lemma}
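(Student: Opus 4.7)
The approach is to adapt the two-step strategy of \cite{vdp6} (used there for the double-phase setting) to the three-phase function $\Z$ at hand. First I would deduce the continuous embedding by dominating $\Z$ pointwise in $(x,t)$ by the critical three-phase function $\A$ of Lemma~\ref{emb}, and then, under the strict inequalities $\al_1<p^*$, $\al_2<q^*$, $\al_3<r^*$, upgrade this to a compact embedding via Vitali's convergence theorem, with the uniform integrability furnished by a Young-type trick.

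For the continuous part, I would exploit the identity
\[
a_1(x)^{\al_2(x)/q(x)}\,t^{\al_2(x)}=\bigl[a_1(x)^{q^*(x)/q(x)}\,t^{q^*(x)}\bigr]^{\al_2(x)/q^*(x)}
\]
together with its $(a_2,r,r^*,\al_3)$-analogue, and the elementary bound $s^{\theta}\leq s+1$ valid for $s\geq 0$ and $\theta\in[0,1]$. Applying this termwise to the three summands of $\Z$ (in the first summand with base $t^{p^*}$ and exponent $\al_1/p^*$), one obtains the pointwise domination
\[
\Z(x,t)\leq \A(x,t)+3\qquad\text{for all }(x,t)\in\overline{\q}\times[0,\infty).
\]
Integrating this inequality and converting modular bounds to Luxemburg-norm bounds through the $\Z$-analogue of Lemma~\ref{rel}, combined with the continuous embedding of Lemma~\ref{emb}, delivers $W^{1,\h}(\q)\hookrightarrow L^{\Z}(\q)$.

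For the compact part, assume the three strict inequalities and let $\{u_n\}\subset W^{1,\h}(\q)$ be bounded. Using Lemma~\ref{em}(ii) with a subcritical exponent I pass to a subsequence with $u_n\to u$ a.e.\ in $\q$. By Lemma~\ref{rel}(v) it suffices to show that $\rho_{\Z}(u_n-u)\to 0$, and I handle the three summands of the modular separately. The unweighted one tends to zero immediately from Lemma~\ref{em}(ii), since $\al_1<p^*$. For the $a_1$-weighted summand I would invoke Vitali's theorem: pointwise convergence to $0$ is clear, and for equi-integrability note that $\theta^{+}:=\sup_{x\in\overline{\q}}\al_2(x)/q^*(x)<1$ by continuity and compactness, so Young's inequality yields, for every $\epsilon>0$, a constant $C_{\epsilon}<\infty$ with
\[
\bigl[a_1(x)^{q^*(x)/q(x)}\ve{u_n-u}^{q^*(x)}\bigr]^{\al_2(x)/q^*(x)}\leq \epsilon\,a_1(x)^{q^*(x)/q(x)}\ve{u_n-u}^{q^*(x)}+C_{\epsilon}
\]
uniformly in $x\in\overline{\q}$. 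Integrating over a measurable $E\subset\q$ and bounding $\sup_{n}\int_{\q}a_1^{q^*/q}\ve{u_n-u}^{q^*}\,dx<\infty$ by Lemma~\ref{emb}, the right-hand side is made arbitrarily small by first choosing $\epsilon$ and then $|E|$. The $a_2$-weighted summand is handled verbatim, replacing $(a_1,q,q^*,\al_2)$ by $(a_2,r,r^*,\al_3)$.

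The main obstacle is Step~2: the pointwise dominant $\A(x,\ve{u_n-u})$ inherited from Step~1 is only uniformly $L^1$-bounded (by Lemma~\ref{emb}) and not equi-integrable, because $p^*,q^*,r^*$ are Sobolev-critical, so it cannot be fed directly into a dominated-convergence argument. The Young-type splitting above is the decisive device: it trades a small factor $\epsilon$ against an $L^1$-bounded quantity plus a uniform constant, and this trade depends crucially on $\theta^{+}<1$, which is exactly what the strict pointwise inequalities $\al_2<q^*$ and $\al_3<r^*$ guarantee on the compact set $\overline{\q}$.
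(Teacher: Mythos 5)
Your proof is correct. The paper itself offers no proof of this lemma---it simply defers to \cite{vdp6}, Propositions~3.4 and~3.7, for the double-phase analogue---and your self-contained argument reproduces that strategy. For the continuous part, the identity
\[
a_1(x)^{\al_2(x)/q(x)}t^{\al_2(x)}=\bigl[a_1(x)^{q^*(x)/q(x)}t^{q^*(x)}\bigr]^{\al_2(x)/q^*(x)},
\]
combined with $s^{\theta}\leq s+1$ for $\theta\in[0,1]$, $s\geq 0$, indeed gives $\Z(x,t)\leq\A(x,t)+3$ pointwise, and this reduces the claim to Lemma~\ref{emb} via the modular--norm relations of Lemma~\ref{rel} applied to $\Z$. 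For the compact part, the decisive device is the $\epsilon$-split Young estimate $s^{\theta(x)}\leq\epsilon s+C_{\epsilon}$; you correctly note this is uniform in $x$ because $\theta^+:=\max_{\overline{\q}}\al_2/q^*<1$, which follows from the pointwise strict inequality $\al_2<q^*$ together with continuity on the compact set $\overline{\q}$. Paired with the uniform $L^1$-bound on $\A(\cdot,|u_n-u|)$ from Lemma~\ref{emb} and the a.e.\ convergence supplied by Lemma~\ref{em}(ii), this gives the equi-integrability needed for Vitali's theorem, and the argument closes. You have also put your finger on the genuine subtlety: the critical dominant $\A(\cdot,|u_n-u|)$ is only $L^1$-bounded, not equi-integrable, so dominated convergence cannot be applied directly and the Young split is what actually does the work. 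This matches the approach in the cited reference and would serve as a complete replacement for the citation.
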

Let $\f(x,t):=c_1(x)\ve{t}^{\ba_1(x)}+c_2(x)a_1(x)^{\frac{\ba_2(x)}{q(x)}}\ve{t}^{\ba_2(x)}+c_3(x)a_2(x)^{\frac{\ba_3(x)}{r(x)}}\ve{t}^{\ba_3(x)}\ \forall (x,t)\in \overline{\q} \times \R$

Then we have $W_0^{1,\h}(\q)\hookrightarrow L^{\f}(\q)$ and 
\be 
S=\inf_{u \in W_0^{1,\h}(\q)\setminus\{0\}} \frac{\mm{u}}{\mm{u}_{\f}}>0.
\ee
\textbf{Concentration compactness principle}
The proof following theorem follows from \cite[Theorem 2.1]{vdp7}.
    \begin{theorem} \label{thm3}
     Assume that $(H_2)$ and $(H_4)$ hold. Let $\{u_n\}$ be a bounded sequence in $W_0^{1,\h}(\q)$ which satisfies the following 
        \begin{align}
            u_n \rightharpoonup u \ \text{in} \ W_0^{1,\h}(\q),\\
           \label{4} \h(\cdot,\p{u_n}) \xrightharpoonup{*} \mu \ \text{in sense of measures},\\
            \label{3}\f(\cdot,\ve{u_n})\xrightharpoonup{*}\nu \ \text{in sense of measures} 
        \end{align}
        where $\mu$ and $\nu$ are bounded measures on $\q.$ Then, there exists $\{x_i\}_{i\in I} \subset C$ of distinct points and $\{\mu_i\}_{i\in I}$, $\{\nu_i\}_{i\in I}\subset (0,\infty)$; $I$ is at most countable set such that 
        \begin{align}
            \nu= &\f(\cdot,\ve{u})+\sum_{i\in I} \nu_i \delta_{x_i}\\
            \mu \geq & \h(\cdot,\p{u})+\sum_{i\in I} \mu_i \delta_{x_i}\\
           \label{2} S \min\{{\nu_i}^{\frac{1}{p^*(x_i)}},{\nu_i}^{\frac{1}{r^*(x_i)}}\} \leq & \max\{{\mu_i}^{\frac{1}{p(x_i)}},{\mu_i}^{\frac{1}{r(x_i)}}\} \ \forall \ i \in I
        \end{align}
          \end{theorem}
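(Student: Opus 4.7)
The plan is to adapt the concentration-compactness principle of Hai Ha and Ho \cite[Theorem 2.1]{vdp7} from the double-phase to the triple-phase setting, proceeding in three stages.

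First, I reduce to the case $u = 0$ by passing to $v_n := u_n - u$. Since Lemma \ref{em} gives the compact embedding $W_0^{1,\h}(\q) \hookrightarrow \hookrightarrow L^{\h}(\q)$ at a subcritical level, $v_n \to 0$ strongly in $L^{\h}(\q)$. A Brezis--Lieb-type splitting combined with the weak-$\ast$ convergence hypotheses then shows that $\tilde{\mu} := \mu - \h(\cdot,\p{u})$ and $\tilde{\nu} := \nu - \f(\cdot,\ve{u})$ are nonnegative bounded Radon measures arising as the weak-$\ast$ limits of $\h(\cdot,\p{v_n})$ and $\f(\cdot,\ve{v_n})$ respectively.

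Second, for any $\phi \in C_c^{\infty}(\q)$, I apply the Sobolev-type embedding $S\mm{w}_{\f} \leq \mm{w}$ to $w = \phi v_n$; the Leibniz rule together with $v_n \to 0$ in $L^{\h}(\q)$ kills the lower-order term, yielding $S\mm{\phi v_n}_{\f} \leq \mm{\phi \nabla v_n}_{\h} + o(1)$. Passing to the limit $n \to \infty$ and using the norm-modular relations of Lemma \ref{rel} produces a reverse-Hölder inequality between $\phi$-weighted integrals against $\tilde{\nu}$ and $\tilde{\mu}$. Lions' abstract concentration lemma \cite{Lions1}, adapted to the Musielak--Orlicz setting as in \cite{vdp7}, then forces $\tilde{\nu} = \sum_{i\in I}\nu_i\delta_{x_i}$ with $I$ at most countable, and $\tilde{\mu} \geq \sum_{i\in I}\mu_i\delta_{x_i}$ for some $\mu_i > 0$.

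Third, I localize the atoms and derive \eqref{2}. For $x_0 \notin C$, the strict inequality $\ba_1(x_0) < p^*(x_0)$ extends by continuity to a neighborhood of $x_0$, and from $(H_4)$ the analogous strict inequalities $\ba_2 < q^*$ and $\ba_3 < r^*$ hold on that neighborhood; Lemma \ref{embb} then provides a local compact embedding into the Musielak space associated with $\f$, which precludes concentration and forces $x_i \in C$. On $C$, $(H_4)$ gives $\ba_1(x_i) = p^*(x_i)$, $\ba_2(x_i) = q^*(x_i)$, $\ba_3(x_i) = r^*(x_i)$. Taking cutoffs $\phi_{\ep,i} \in C_c^{\infty}(B_{\ep}(x_i))$ with $\phi_{\ep,i}(x_i) = 1$, applying $S\mm{\phi_{\ep,i} v_n}_{\f} \leq \mm{\phi_{\ep,i} v_n}$, and sending first $n \to \infty$ then $\ep \to 0^+$, Lemma \ref{rel} converts the localized modulars into powers $\nu_i^{1/p^*(x_i)}, \nu_i^{1/r^*(x_i)}$ on the left and $\mu_i^{1/p(x_i)}, \mu_i^{1/r(x_i)}$ on the right. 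The $\min$ and $\max$ in \eqref{2} appear because the direction of the norm-modular conversion (Lemma \ref{rel} (iii) vs.\ (iv)) depends on whether the relevant norm is below or above $1$, and the regime is not known a priori.

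The main obstacle is the careful bookkeeping of these norm-modular conversions across three phases with Lipschitz-variable exponents. In particular, one must verify that as $\ep \to 0$ the middle $q$-phase contribution is consistently dominated by the extreme $p$- and $r$-phases, so that only $p^*, r^*$ (resp.\ $p, r$) appear in \eqref{2}; this is precisely what $(H_4)$ guarantees through the identity $p^* - \ba_1 = q^* - \ba_2 = r^* - \ba_3$, which couples the three critical scales and prevents the intermediate term from dominating at atoms.
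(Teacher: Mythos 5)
Your proposal follows the same route as the paper, which gives no detailed argument and simply defers to the adaptation of Hai Ha and Ho \cite[Theorem 2.1]{vdp7}; the three-stage outline you give (reduce to $u=0$, reverse-H\"older via local Sobolev inequality plus Lions' lemma, then localize atoms and pass $\ep\to 0$ with Lemma \ref{rel}) is exactly the standard scheme that the cited result is built on, carried over from two phases to three.

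One inaccuracy worth flagging: you assert that $\tilde\mu := \mu - \h(\cdot,\p{u})$ \emph{is} the weak-$\ast$ limit of $\h(\cdot,\p{v_n})$, invoking a Brezis--Lieb splitting. For the $\nu$ side this is legitimate because $u_n \to u$ a.e.\ (subsequence) and Brezis--Lieb applies to $\f(\cdot,|u_n|)$. For the gradient term it does not: weak convergence in $W^{1,\h}_0(\q)$ gives no a.e.\ convergence of $\nabla u_n$, so Brezis--Lieb is unavailable. What one actually has, from convexity and weak lower semicontinuity, is $\mu \geq \h(\cdot,\p{u}) + \bar\mu$ where $\bar\mu$ is the weak-$\ast$ limit of $\h(\cdot,\p{v_n})$ along a subsequence. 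Your argument should be run against $\bar\mu$ rather than $\tilde\mu$, deriving the reverse-H\"older inequality between $\tilde\nu$ and $\bar\mu$, and then concluding $\mu \geq \h(\cdot,\p{u}) + \sum_i \bar\mu(\{x_i\})\delta_{x_i}$. Since the theorem only asserts an inequality for $\mu$, this correction preserves the conclusion; but as written the claim about $\tilde\mu$ is false. The rest of the proposal --- in particular the localization of the atoms to the critical set $C$ via Lemma \ref{embb} and the use of $(H_4)$ to couple the three critical scales, and the explanation of why $\min/\max$ with the extreme exponents $p^*,r^*$ (resp.\ $p,r$) appear in \eqref{2} --- is sound.
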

Next, we recall the HLSI for variable exponent Sobolev spaces.
\begin{lemma}\cite{cv1} \label{var}
Assume that \( z, s \in C_{+}(\overline{\Omega}) \), \( m \in L^{z^+}(\Omega) \cap L^{z^{-}}(\Omega) \), \( w \in L^{s^{+}}(\Omega) \cap L^{s^{-}}(\Omega) \) and \( d : \overline{\Omega} \times \overline{\Omega} \to \mathbb{R} \) be a continuous function such that \( 0 < d^{-} \leq d^{+} < N \) and
\[
\frac{1}{z(x)} +\frac{d(x, y)}{N} + \frac{1}{s(y)} = 2.
\]
Then there exists a constant \( c > 0 \), independent of \( m, w \), such that the inequality
\[
\left| \int_{\Omega} \int_{\Omega} \frac{m(x) w(y)}{|x - y|^{d(x, y)}} dx dy \right| \leq c \left( \|m\|_{L^{z^+}(\Omega)} \|w\|_{L^{s^+}(\Omega)} + \|m\|_{L^{z^{-}}(\Omega)} \|w\|_{L^{s^{-}}(\Omega)} \right)
\]
holds. Moreover, if 
\[
\frac{1}{z(x)} +\frac{d(x, y)}{N} + \frac{1}{z(y)} = 2,
\] 
 then 
\[
\left| \int_{\Omega} \int_{\Omega} \frac{w(x) w(y)}{|x - y|^{d(x, y)}} dx dy \right| \leq c \left(\mm{w}^2_{L^{\frac{2N}{2N-d^+}}(\q)}+ \mm{w}^2_{L^{\frac{2N}{2N-d^-}}(\q)}\right).
\]
\end{lemma}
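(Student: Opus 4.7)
The plan is to reduce the variable-exponent HLSI to its classical (constant-exponent) counterpart by exploiting the monotonicity of $t \mapsto |x-y|^{t}$ in the kernel and splitting the domain of integration. Write $\Omega \times \Omega = A_1 \cup A_2$ with $A_1 = \{(x,y) : |x-y| \leq 1\}$ and $A_2 = \{(x,y) : |x-y| > 1\}$. On $A_1$, the inequality $d(x,y) \leq d^+$ combined with $|x-y| \leq 1$ forces $|x-y|^{-d(x,y)} \leq |x-y|^{-d^+}$; on $A_2$, the inequality $d(x,y) \geq d^-$ combined with $|x-y| > 1$ forces $|x-y|^{-d(x,y)} \leq |x-y|^{-d^-}$. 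Therefore
\[
\left| \int_\Omega \int_\Omega \frac{m(x)w(y)}{|x-y|^{d(x,y)}}\, dx\, dy \right| \leq \int_\Omega \int_\Omega \frac{|m(x)||w(y)|}{|x-y|^{d^+}}\, dx\, dy + \int_\Omega \int_\Omega \frac{|m(x)||w(y)|}{|x-y|^{d^-}}\, dx\, dy,
\]
where the integrals on the right have been extended (freely, since the integrands are nonnegative) to the whole product $\Omega \times \Omega$.

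Next, I would pin down the Lebesgue exponents compatible with the constant kernels $|x-y|^{-d^{\pm}}$. From the hypothesis $\frac{1}{z(x)} + \frac{d(x,y)}{N} + \frac{1}{s(y)} = 2$ one extracts $d(x,y) = N\bigl(2 - \frac{1}{z(x)} - \frac{1}{s(y)}\bigr)$. The quantity $\frac{1}{z(x)} + \frac{1}{s(y)}$ attains its minimum precisely when $z(x)$ and $s(y)$ reach their suprema $z^+$ and $s^+$, and its maximum when they reach their infima, so
\[
d^+ = N\left(2 - \frac{1}{z^+} - \frac{1}{s^+}\right), \qquad d^- = N\left(2 - \frac{1}{z^-} - \frac{1}{s^-}\right).
\]
These identities can be rewritten as $\frac{1}{z^+} + \frac{d^+}{N} + \frac{1}{s^+} = 2$ and $\frac{1}{z^-} + \frac{d^-}{N} + \frac{1}{s^-} = 2$, which are exactly the conditions needed to apply the classical Hardy--Littlewood--Sobolev inequality of Lieb--Loss on the two integrals above.

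Invoking the classical HLSI then yields
\[
\int_\Omega \int_\Omega \frac{|m(x)||w(y)|}{|x-y|^{d^+}}\, dx\, dy \leq c\, \mm{m}_{L^{z^+}(\Omega)} \mm{w}_{L^{s^+}(\Omega)},
\]
and the analogous bound with $z^-$, $s^-$ on the other piece; summing delivers the first assertion. For the second statement, take $s \equiv z$ and $m = w$. The extremal identification now reads $d^\pm = N(2 - 2/z^\pm)$, which inverts to $z^\pm = \frac{2N}{2N - d^\pm}$; substituting into the first assertion recovers the stated bound by $\mm{w}^{2}_{L^{2N/(2N-d^+)}(\Omega)} + \mm{w}^{2}_{L^{2N/(2N-d^-)}(\Omega)}$.

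The main obstacle is essentially bookkeeping: verifying that the extrema of the continuous function $d(x,y)$ on $\overline{\Omega} \times \overline{\Omega}$ align exactly with the combinations of $z^{\pm}$ and $s^{\pm}$ prescribed by the classical HLSI compatibility relation. Once this correspondence is established, the proof reduces to a clean two-region decomposition followed by two invocations of the constant-exponent Lieb--Loss inequality, with no genuine analysis beyond the classical tool itself.
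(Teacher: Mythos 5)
Your proposal is correct and uses precisely the argument employed by Alves--Tavares in \cite{cv1} (the paper itself only cites the lemma without proof): split the kernel over $\{|x-y|\le 1\}$ and $\{|x-y|>1\}$, bound it pointwise by $|x-y|^{-d^+}$ and $|x-y|^{-d^-}$ respectively, identify the compatible constant exponents $z^{\pm}, s^{\pm}$ from the constraint $d(x,y)=N\bigl(2-\tfrac{1}{z(x)}-\tfrac{1}{s(y)}\bigr)$, and invoke the classical Lieb--Loss HLSI twice. The extremal bookkeeping ($d^{+}=N(2-1/z^{+}-1/s^{+})$, $d^{-}=N(2-1/z^{-}-1/s^{-})$) is sound because $z$ and $s$ are continuous on the compact set $\overline{\Omega}$ and depend on independent variables, so the infimum of $1/z(x)+1/s(y)$ is attained at $(z^{+},s^{+})$.
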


The next lemma is a version of HLSI for the space $W^{1,\h}(\q)$.
\begin{lemma}\label{HL} If $(L_1)$ holds, then for every $u\in W_0^{1,\h}(\q)$ we have 
\be \label{20} \left| \int_{\Omega} \int_{\Omega} \frac{F(x,u(x)) F(y,u(y))}{|x - y|^{d(x, y)}} dx dy \right| \leq C \max \left(\mm{u}^{2\al_1^-}, \ \mm{u}^{2\al_3^+}\right)\ee
\end{lemma}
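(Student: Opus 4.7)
The plan is to reduce the estimate to the diagonal Hardy--Littlewood--Sobolev inequality (second half of Lemma \ref{var}) applied to $w(\cdot) := F(\cdot,u(\cdot))$. This requires controlling $\mm{w}_{L^\sigma(\q)}$ for the two constant exponents $\sigma\in\{\tfrac{2N}{2N-d^-},\tfrac{2N}{2N-d^+}\}$, which is done via the growth bound on $F$ coming from $(L_1)$ together with Lemma \ref{embb} and Lemma \ref{rel}.

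The first step is to integrate the pointwise bound in $(L_1)$ from $0$ to $u(x)$, which gives
\[
|F(x,u(x))| \le C\Bigl(|u(x)|^{\al_1(x)} + a_1(x)^{\frac{\al_2(x)}{q(x)}}|u(x)|^{\al_2(x)} + a_2(x)^{\frac{\al_3(x)}{r(x)}}|u(x)|^{\al_3(x)}\Bigr).
\]
Raising to the power $\sigma\ge 1$ and using $(a+b+c)^\sigma \le 3^{\sigma-1}(a^\sigma+b^\sigma+c^\sigma)$ produces $|F(x,u(x))|^\sigma \le C\,\tilde\Z(x,|u(x)|)$, where
\[
\tilde\Z(x,t) = t^{\tilde\al_1(x)} + a_1(x)^{\frac{\tilde\al_2(x)}{q(x)}} t^{\tilde\al_2(x)} + a_2(x)^{\frac{\tilde\al_3(x)}{r(x)}} t^{\tilde\al_3(x)}, \qquad \tilde\al_i(x) := \sigma\al_i(x).
\]
The weight-to-exponent ratios $\sigma\al_i/q = \tilde\al_i/q$ match exactly the template of Lemma \ref{embb}, and the membership $\al_i \in M_i(\q)$ in $(L_1)$ yields $\tilde\al_1 < p^*$, $\tilde\al_2 < q^*$, $\tilde\al_3 < r^*$ on $\overline{\q}$ for both admissible values of $\sigma$.

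The second step invokes Lemma \ref{embb} with the inflated exponents $\tilde\al_i$ to give the continuous embedding $W_0^{1,\h}(\q) \hookrightarrow L^{\tilde\Z}(\q)$, and Lemma \ref{rel} applied to the $N$-function $\tilde\Z$ then converts the modular into a power of the norm:
\[
\int_\q |F(x,u(x))|^\sigma\,dx \;\le\; C\int_\q \tilde\Z(x,|u(x)|)\,dx \;\le\; C\max\bigl(\mm{u}^{\sigma\al_1^-},\mm{u}^{\sigma\al_3^+}\bigr).
\]
Taking $\sigma$-th roots and then squaring produces $\mm{F(\cdot,u(\cdot))}_{L^\sigma(\q)}^2 \le C\max(\mm{u}^{2\al_1^-},\mm{u}^{2\al_3^+})$ for each $\sigma\in\{\tfrac{2N}{2N-d^-},\tfrac{2N}{2N-d^+}\}$. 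Substituting both estimates into the diagonal HLSI in Lemma \ref{var} immediately yields \eqref{20}.

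The main obstacle is the bookkeeping of weights and exponents: one must verify that raising the growth template for $F$ to the $\sigma$-th power reproduces exactly the Musielak structure of Lemma \ref{embb} with the correct weight-to-exponent ratios, and that $(L_1)$ is tight enough to keep both $\tfrac{2N}{2N-d^+}\al_i$ and $\tfrac{2N}{2N-d^-}\al_i$ strictly below the respective Sobolev conjugates $p^*, q^*, r^*$. Both checks are immediate consequences of the definition of $M_i(\q)$ in $(L_1)$, but they are the nontrivial content of the argument; the rest is modular-to-norm conversion and a direct application of Lemma \ref{var}.
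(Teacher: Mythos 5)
Your proposal is correct and follows essentially the same route as the paper's own proof: integrate the $(L_1)$ growth bound to control $F$, distribute the $\sigma$-th power across the three-term Musielak template, invoke Lemma \ref{embb} (using the $M_i(\q)$ membership to check subcriticality of $\sigma\al_i$) together with Lemma \ref{rel} to convert the modular bound into a power of $\mm{u}$, and finish with the diagonal Hardy--Littlewood--Sobolev estimate from Lemma \ref{var}. You make the exponent bookkeeping and the convexity inequality slightly more explicit than the paper does, but there is no substantive difference in approach.
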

\begin{proof}
Firstly we will show that $F(\cdot, u(\cdot)) \in L^{\frac{2N}{2N-d^+}}(\q)$.
Consider
\begin{align*}
    &\int_{\q}\ve{F(x,u(x))}^{\frac{2N}{2N-d^+}}\,dx \leq c  \int_{\q}\left(\ve{u}^{\al_1(x)}+a_1(x)^{\frac{\al_2(x)}{q(x)}}\ve{u}^{\al_2(x)}+a_2(x)^{\frac{\al_3(x)}{r(x)}}\ve{u}^{\al_3(x)}\right)^{\frac{2N}{2N-d^+}}\,dx\\
    &\leq c \int_{\q}\left(\ve{u}^{\al_1(x){\frac{2N}{2N-d^+}}}+a_1(x)^{{\frac{2N}{2N-d^+}}\frac{\al_2(x)}{q(x)}}\ve{u}^{{\frac{2N}{2N-d^+}}\al_2(x)}+a_2(x)^{{\frac{2N}{2N-d^+}}\frac{\al_3(x)}{r(x)}}\ve{u}^{{\frac{2N}{2N-d^+}}\al_3(x)}\right)\,dx\\
    &\leq c \max \left(\mm{u}_{\psi}^{\frac{2N\al_1^-}{2N-d^+}},\mm{u}_{\psi}^{\frac{2N\al_3^+}{2N-d^+}}\right)\\
    &\leq c_{\psi} \max \left(\mm{u}^{\frac{2N\al_1^-}{2N-d^+}},\mm{u}^{\frac{2N\al_3^+}{2N-d^+}}\right)<\infty.
    \end{align*}
where $\psi(x,u)=\ve{u}^{\al_1(x){\frac{2N}{2N-d^+}}}+a_1(x)^{{\frac{2N}{2N-d^+}}\frac{\al_2(x)}{q(x)}}\ve{u}^{{\frac{2N}{2N-d^+}}\al_2(x)}+a_2(x)^{{\frac{2N}{2N-d^+}}\frac{\al_3(x)}{r(x)}}\ve{u}^{{\frac{2N}{2N-d^+}}\al_3(x)}$ and the last two inequalities follow from Lemma \ref{rel} and Lemma \ref{embb}, respectively.
Moreover, 
$$\mm{F(x,u(x))}_{L^{\frac{2N}{2N-d^+}}(\q)}^2\leq c \max \left(\mm{u}^{2\al_1^-},\mm{u}^{2\al_3^+}\right)$$
Similarly, we have 
\be
\mm{F(x,u(x))}_{L^{\frac{2N}{2N-d^-}}(\q)}^2\leq c \max \left(\mm{u}^{2\al_1^-},\mm{u}^{2\al_3^+}\right).
\ee
Thus, we have $F(\cdot, u(\cdot))\in L^{{\frac{2N}{2N-d^-}}}(\q)\cap L^{\frac{2N}{2N-d^+}}(\q)$. From the Lemma \ref{var}, we get 
\begin{multline*}
\left| \int_{\Omega} \int_{\Omega} \frac{F(x,u(x)) F(y,u(y))}{|x - y|^{d(x, y)}} dx dy \right| \leq C\left(\mm{F(\cdot, u(\cdot))}^2_{L^{\frac{2N}{2N-d^-}}(\q)}+\mm{F(\cdot, u(\cdot))}^2_{L^{\frac{2N}{2N-d^+}}(\q)}\right) \\
\leq c \max \left(\mm{u}^{2\al_1^-},\mm{u}^{2\al_3^+}\right).
\end{multline*}
\end{proof}
This Lemma also holds for  $u\in W^{1,\h}(\q)$.

Next, we define the notion of a weak solution of \eqref{model}. A function $u \in W_0^{1,\h}(\q)$ is said to be weak solution for the problem \eqref{model} iff
\begin{align}
    \begin{split}
        M(\varphi_{\h}(\p{u}))\int_{\q}\left(\p{u}^{p(x)-2}+a_1(x)\p{u}^{q(x)-2}+a_2(x)\p{u}^{r(x)-2}\right)\nabla u\nabla v \,dx\\
        =\e \int_{\q}g(x)\ve{u}^{\ga(x)-2}uv\,dx+\theta \int_{\q} B(x,u)v\,dx+\kappa \int_{\Omega} \int_{\Omega} \frac{F(y,u(y))f(x,u(x))v(x)}{\ve{x-y}^{d(x,y)}}\,dxdy, 
    \end{split}
\end{align}
holds for all $v \in W_0^{1,\h}(\q)$.
The corresponding energy functional is given by $J: W_0^{1,\h}(\q) \to \R$
\begin{align}
    \begin{split}
    J(u)&=\Hat{M}(\varphi_{\h}(\p{u}))
    -\theta\int_{\q}\left(c_1(x)\frac{\ve{u(x)}^{\ba_1(x)}}{\ba_1(x)}+c_2(x)a_1(x)^{\frac{\ba_2(x)}{q(x)}}\frac{\ve{u}^{\ba_2(x)}}{\ba_2(x)}+c_3(x)a_2(x))^{\frac{\ba_3(x)}{r(x)}}\frac{\ve{u}^{\ba_3(x)}}{\ba_3(x)}\right)\,dx\\-&\e\int_{\q}\frac{g(x)}{\ga(x)}\ve{u}^{\ga(x)}\, dx-\frac{\kappa}{2}\int_{\Omega} \int_{\Omega} \frac{F(y,u(y))F(x,u(x))}{\ve{x-y}^{d(x,y)}}\,dxdy
    \end{split}
\end{align}
One can verify that $J\in C^1\left(W_0^{1,\h}(\q),\R \right)$ and a critical point of $J$ is a weak solution to \eqref{model}.
\section{Proof of Theorem \ref{thm1}} 
The proof of Theorem \ref{thm1} relies on the Mountain Pass theorem and Ekeland's variational principle.
\begin{lemma} \label{mpg1}
Let $(L_1)-(L_2)$ and $(M_1)-(M_2)$ hold. Then 
    \begin{enumerate}[(i)]
    \item there exist $\omega, \tau,\e_0$ such that for $\e\in(0,\e_0),$  $J(u)\geq \tau$ for every $u\in W_0^{1,\h}(\q)$ with $\mm{u}=\omega$.
    \item for every $\e,\theta,\kappa>0,$ there exists $z\in W_0^{1,\h}(\q)$ such that $J(z)<0,$ provided $\mm{z}>\omega$.
    \end{enumerate}
\end{lemma}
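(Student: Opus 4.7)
The plan is to read $J$ as a sum of a non-negative Kirchhoff modular term and three subtracted terms, then estimate each in terms of $\mm{u}$ using the modular-to-norm relations of Lemma \ref{rel} together with the embeddings already established (Lemma \ref{g}, Lemma \ref{embb}, Lemma \ref{HL}). For part (i), I argue at small $\mm{u} = \omega$ and identify $\omega^{r^+}$ as the dominant positive term; for part (ii), I test $J$ along a ray $t\mapsto tv$ with $v \in C_c^\infty(\q)\setminus\{0\}$ and show the critical subtracted term eventually outgrows the Kirchhoff term.

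\textbf{Part (i).} For $\mm{u}\leq 1$, $(M_1)$ gives $\hat{M}(\varphi_{\h}(\p{u})) \geq m_0 \varphi_{\h}(\p{u}) \geq (m_0/r^+)\x_{\h}(\p{u}) \geq (m_0/r^+)\mm{u}^{r^+}$, using Lemma \ref{rel}(iii) applied to $\h$. Lemma \ref{embb} with $\al_i = \ba_i$ (permitted by $(H_4)$) yields $W_0^{1,\h}(\q)\hookrightarrow L^{\f}(\q)$, and a further application of Lemma \ref{rel}(iii) bounds the critical term by $C_1\theta\,\omega^{\ba_1^-}$. Lemma \ref{g} bounds the $\ga$-term by $C_2\e\,\omega^{\ga^-}$, and Lemma \ref{HL} bounds the Choquard piece by $C_3\kappa\,\omega^{2\al_1^-}$ (for $\omega<1$ the smaller exponent $2\al_1^-$ dominates the maximum in \eqref{20}). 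Collecting,
$$J(u) \geq \frac{m_0}{r^+}\omega^{r^+} - C_1\theta\,\omega^{\ba_1^-} - C_2\e\,\omega^{\ga^-} - C_3\kappa\,\omega^{2\al_1^-}.$$
From $(H_4)$ and $(L_1)$, $\ba_1^- > r^+$ and $2\al_1^- > r^+$, so the $\theta$- and $\kappa$-terms are $o(\omega^{r^+})$ as $\omega\to 0^+$ and may be absorbed into half of the leading term by fixing $\omega$ small. Since $\ga^- < r^+$ (from $(H_1)$), I then pick $\e_0$ small enough that $C_2\e\,\omega^{\ga^-} \leq (m_0/(4r^+))\omega^{r^+}$ for $\e \in (0,\e_0)$, obtaining $J(u) \geq \tau := m_0\omega^{r^+}/(4r^+) > 0$.

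\textbf{Part (ii).} Fix $v\in C_c^\infty(\q)\setminus\{0\}$. For $t\geq 1$ one has the crude bound $\varphi_{\h}(t\p{v}) \leq C(v)\,t^{r^+}$. Rewriting $(M_2)$ as $\hat{M}'(\tau)/\hat{M}(\tau) \leq 1/(\eta\tau)$ and integrating on $[\tau_0,\tau]$ gives $\hat{M}(\tau) \leq C\tau^{1/\eta}$ for $\tau$ large; hence $\hat{M}(\varphi_{\h}(t\p{v})) \leq C' t^{r^+/\eta}$. Using $c_1>0$ and $v\not\equiv 0$, for $t>1$,
$$\theta\int_{\q} c_1(x)\frac{\ve{tv}^{\ba_1(x)}}{\ba_1(x)}\,dx \geq c_v\,\theta\, t^{\ba_1^-}.$$
The remaining subtracted pieces of $J(tv)$ are non-negative (since $g>0$ and $F(x,\cdot)\geq 0$ on $\R^+$ by $(L_2)$), so dropping them gives $J(tv) \leq C' t^{r^+/\eta} - c_v\theta\, t^{\ba_1^-}$. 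Because $r^+/\eta < \ba_1^-$ by $(M_2)$, the right side tends to $-\infty$ as $t\to\infty$, and taking $z = t_* v$ with $t_*$ large enough that $\mm{z}>\omega$ and $J(z)<0$ completes the construction.

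\textbf{Main obstacle.} The principal delicacy is in part (i): one must ensure that $\omega^{\ba_1^-}$ and $\omega^{2\al_1^-}$ truly vanish faster than $\omega^{r^+}$, so that the $\theta$- and $\kappa$-terms can be absorbed without any smallness assumption on $\theta,\kappa$; only after $\omega$ has been fixed in this way can $\e_0$ be chosen to kill the remaining $\ga$-term, and the order of these choices is essential. In part (ii) the Gronwall-type passage from the differential inequality $(M_2)$ to the polynomial bound $\hat{M}(\tau) \leq C\tau^{1/\eta}$ is routine but must be carried out on an interval $[\tau_0,\infty)$ before being composed with the modular estimate for $\varphi_{\h}(t\p{v})$.
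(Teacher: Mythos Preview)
Your proposal is correct and follows essentially the same strategy as the paper's proof: in (i) you bound $\hat M(\varphi_{\h}(\p{u}))$ below by a multiple of $\mm{u}^{r^+}$, control the three subtracted terms via Lemmas \ref{g}, \ref{embb}, \ref{HL}, and exploit $r^+<\ba_1^-$, $r^+<2\al_1^-$ to first fix $\omega$ and then $\e_0$; in (ii) you integrate $(M_2)$ to get $\hat M(\tau)\leq C\tau^{1/\eta}$ and compare $t^{r^+/\eta}$ against $t^{\ba_1^-}$ along a ray. The only cosmetic differences are that you invoke $(M_1)$ alone for the lower bound on $\hat M$ (the paper routes through $(M_2)$ to pick up an extra factor $\eta$, which is unnecessary), and in (ii) you should take $v\geq 0$ so that the Choquard term is indeed nonnegative when you drop it---an omission the paper shares.
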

\begin{proof}
$(i)$ By using $(M_2), (M_1)$ Lemma \ref{rel} and Lemma \ref{HL} for $u\in W_0^{1,\h}(\q)$ with $\mm{u}<1,$ we have 
    \begin{align*}
        J(u)&\geq \eta m_0 \varphi_{\h}(\p{u}) -\frac{\e}{\ga^-}\int_{\q}g(x)\ve{u}^{\ga(x)}\,dx-\frac{\theta}{\ba_1^-}\int_{\q}\f(x,u)\,dx\\
        &-\frac{\kappa}{2}\int_{\Omega} \int_{\Omega} \frac{F(y,u(y))F(x,u(x))}{\ve{x-y}^{d(x,y)}}\,dxdy\\
        &\geq \frac{m_0 \eta}{r^+}\x_{\h}(\nabla u)-c\frac{\e}{\ga^-}\mm{u}^{\ga^-}-\frac{\theta}{\ba_1^-}\max\left(\mm{u}^{\ba_1^-}_{\f},\mm{u}^{\ba_3^+}_{\f}\right)
        -\frac{\kappa}{2} c \max \left(\mm{u}^{2\al_1^-},\mm{u}^{2\al_3^+}\right)\\
        & \geq \frac{m_0 \eta}{r^+}\mm{u}^{r^+}-c\frac{\e}{\ga^-}\mm{u}^{\ga^-}-\frac{\theta}{\ba_1^-}c_{\f}\mm{u}^{\ba_1^-}-\kappa C \mm{u}^{2\al_1^-}.
    \end{align*}
    Since $r^+<\ba_1^-$ and $r^+<2\al_1^-$ so, we can choose $\mm{u}=\omega<1$ to be small enough such that 
    \[c_{\omega}=\frac{m_0 \eta}{r^+} \omega^{r^+}-\frac{\theta}{\ba_1^-}c_{\f}\omega^{\ba_1^-}-\kappa C \omega^{2\al_1^-}>0\]
    Choose $\e_0=\frac{c_{\omega}\ga^-}{2c \omega^{\ga^-}}$ then for $\e \in (0, \e_0)$ we get $J(u)\geq \frac{c_{\omega}}{2}>0$. So, $J(u)\geq \tau=\frac{c_{\omega}}{2}$ whenever $\mm{u}=\omega$.

    $(ii)$ By $(M_2)$ we have 
        \be \label{3.1}
        \Hat{M}(t)\leq \frac{\Hat{M}(t_0)}{t_0^{\frac{1}{\eta}}}t^{\frac{1}{\eta}}=c t^{\frac{1}{\eta}} \ \forall t \geq t_0>0
        \ee
        Take $v \in C_{c}^{\infty}(\q)\setminus\{0\}$ with $\mm{v}=1$ then for $t\geq t_0>1$, by using \eqref{3.1} we have 
        \begin{align*}
          J(tv)& \leq c(\varphi_{\h}(\p{t v}))^{\frac{1}{\eta}}-\e \int_{\q}\frac{g(x)}{\ga(x)}\ve{tv}^{\ga(x)}\, dx-\theta \int_{\q}\Hat{B}(x,tv)\,dx\\
          & \leq \frac{c t^{\frac{r^+}{\eta}}}{(p^-)^{\frac{1}{\eta}}}\left(\x_{\h}(\nabla v)\right)^{\frac{1}{\eta}}-\frac{\e t^{\ga^-}}{\ga^+}\int_{\q}g(x)\ve{v}^{\ga(x)}\,dx-\frac{\theta t^{\ba_1^-}}{\ba_3^+}\int_{\q}\f(x,v)\,dx\\
          \implies & J(tv)\to -\infty \ \text{as}\  t \to \infty. 
        \end{align*}
       Thus, we can choose $t_1>1$ and $z=t_1v$ such that $\mm{z}>\omega$ and $J(z)<0$. 
    \end{proof}
    \begin{lemma} \label{m3}
      Assume that $(M_3)$ and $(L_2)$ hold, then there exists $v(>0)\in W_0^{1,\h}(\q)$ such that $J(v)<0$ and $\mm{v}<\omega.$
  \end{lemma}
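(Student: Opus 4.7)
The plan is to construct $v$ of the form $v = tw$ for $t > 0$ small, where $w \in C_c^\infty(\q)$ is a fixed bump function with $0 \leq w \leq 1$, $w \not\equiv 0$, chosen so that $\int_\q g(x)\ve{w}^{\ga(x)}\,dx > 0$ (possible since $g > 0$ a.e.\ by $(H_3)$). Then $\mm{v} = t\,\mm{w}$, so the norm constraint $\mm{v} < \omega$ is automatic once $t$ is small; the real work is to show $J(tw) < 0$ for some such $t$.

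Next, I would bound the Kirchhoff contribution above and the $\ga$-term below in terms of powers of $t$. For $t \in (0,1)$, $t^{p(x)} \leq t^{p^-}$ (and similarly for $q,r$) yields $\varphi_{\h}(t\p{w}) \leq C_w\, t^{p^-}$; since $\varphi_{\h}(t\p{w}) \to 0$ as $t \to 0^+$, assumption $(M_3)$ then upgrades this to $\hat{M}(\varphi_{\h}(t\p{w})) \leq C\, t^{p^- \zeta}$ for small $t$. Using $(t\ve{w})^{\ga(x)} \geq t^{\ga^+}\ve{w}^{\ga(x)}$ (valid when $t\ve{w} \leq 1$), the $\ga$-term satisfies $\e \int_\q g\,\ve{tw}^{\ga(x)}/\ga(x)\,dx \geq \e\, C'_w\, t^{\ga^+}$ with $C'_w > 0$. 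The remaining two terms of $J(tw)$ are subtracted and nonnegative: the $\theta$-term because $c_i, a_i \geq 0$, and the Choquard double integral because $F(x,tw) \geq 0$ on $\{tw \geq 0\}$ by $(L_2)$. Dropping them yields
\[
J(tw) \leq C\, t^{p^- \zeta} - \e\, C'_w\, t^{\ga^+}.
\]

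The pivotal exponent inequality is $p^- \zeta > \ga^+$. By $(M_3)$, $p^- \zeta > \ba_3^+$; and using $(H_4)$, with $\delta(x) := p^*(x) - \ba_1(x) < p^*(x) - r(x)$ since $\ba_1 > r$, one checks that $\ba_3(x) = r^*(x) - \delta(x) > r(x) > \ga(x)$ pointwise, hence $\ba_3^+ > \ga^+$. Consequently $t^{p^- \zeta} = o(t^{\ga^+})$ as $t \to 0^+$, so $J(tw) < 0$ for all sufficiently small $t$. Shrinking $t$ further if needed to secure $t\,\mm{w} < \omega$ and setting $v := tw$ completes the construction.

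The main obstacle, though largely a matter of bookkeeping, is verifying the strict exponent inequality $p^- \zeta > \ga^+$: one must combine $(M_3)$ with the pointwise orderings encoded in $(H_1)$ and $(H_4)$. Once that is in hand, the rest reduces to elementary comparison of powers of $t$ as $t \to 0^+$.
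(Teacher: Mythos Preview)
Your proposal is correct and follows essentially the same route as the paper: fix a nonnegative bump function, scale by a small parameter $t$, use $(M_3)$ to bound the Kirchhoff term by $C t^{p^-\zeta}$, bound the $\ga$-term below by $c\,t^{\ga^+}$, drop the nonnegative Choquard contribution via $(L_2)$, and conclude from $p^-\zeta>\ga^+$. The only cosmetic difference is that the paper also retains the $\theta$-term (bounded below by $c\,t^{\ba_3^+}$) rather than discarding it, but since the decisive comparison is still $t^{p^-\zeta}$ versus $t^{\ga^+}$ this changes nothing; your verification of $p^-\zeta>\ba_3^+>\ga^+$ via $(M_3)$, $(H_4)$ and $(H_1)$ is in fact more explicit than what the paper writes.
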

  \begin{proof}
      Choose $v \in C_{c}^{\infty}(\q)\setminus\{0\}$ with $\mm{v}=1$ and $t \in (0,\omega)$ then by $(M_3)$
  \[
          J(tv)\leq c\frac{t^{\zeta p^-}}{(p^-)^{\zeta}} \left(\x_{\h}(\nabla v)\right)^{\zeta}-\e \frac{t^{\ga^+}}{\ga^+}\int_{\q}g(x)\ve{v}^{\ga(x)}\,dx-\frac{\theta t^{\ba_3^+}}{\ba_3^+}\int_{\q}\f(x,v)\,dx
      \]
      Since $\zeta p^->\ba_3^+>\ga^+$ so for sufficiently small $t_* \in (0,\omega),$ we have $J(t_* v)<0$.
  \end{proof}
  \begin{lemma} \label{PS} Assume that $(H_2)-(H_4), (L_1)-(L_2)$ and $(M_1)-(M_2)$ hold then the functional $J$ satisfies 
    $(PS)_c$ condition for $c<c_*,$ where 
    \be
    c_*=\left(\frac{1}{l}-\frac{1}{\ba_1^-}\right)m_0 \min \left(S^{h_1},S^{h_2}\right) \min\left(\left(\frac{m_0}{\theta} \right)^{b_1},\left(\frac{m_0}{\theta} \right)^{b_2}\right)
    \ee
    where 
    $h_1=\left(\frac{p r^*}{r^*-p}\right)^-, \ h_2=\left(\frac{r p^*}{p^*-r}\right)^+$
    and $b_1=\left(\frac{p}{r^*-p}\right)^-, \ b_2= \left(\frac{r}{p^*-r}\right)^+$.
  \end{lemma}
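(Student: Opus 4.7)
The plan is to run the standard critical Palais--Smale scheme adapted to the Kirchhoff--Choquard--multiphase structure. Let $\{u_n\} \subset W_0^{1,\h}(\q)$ satisfy $J(u_n) \to c$ and $J'(u_n) \to 0$ in $(W_0^{1,\h}(\q))^*$, and assume $c < c_*$.

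\textbf{Boundedness.} I would first form the combination $J(u_n) - \tfrac{1}{l}\langle J'(u_n), u_n\rangle = c + o(1)(1 + \mm{u_n})$. By $(M_2)$ together with the elementary estimate $\int_\q \h(\cdot, \p{u_n})\,dx \leq r^+ \varphi_\h(\p{u_n})$, the Kirchhoff contribution dominates $m_0(\eta - r^+/l)\varphi_\h(\p{u_n})$, whose coefficient is strictly positive since $l > r^+/\eta$ by $(L_2)$. The $B$-contribution splits phase-wise with coefficients $\tfrac{1}{l} - \tfrac{1}{\ba_i(\cdot)}$, each strictly positive since $l < \ga^- < p^- \leq \ba_1^- \leq \ba_i^-$ by $(H_1)$ and $(H_4)$; the $g$-term has coefficient $\tfrac{1}{l} - \tfrac{1}{\ga(\cdot)} > 0$; and the Choquard double integral is nonnegative by $lF(x,t) \leq 2 f(x,t) t$ from $(L_2)$. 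Consequently $\varphi_\h(\p{u_n}) \leq C(1 + \mm{u_n})$, which forces $\mm{u_n}$ bounded via Lemma \ref{rel}.

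\textbf{Compactness analysis and exclusion of atoms.} Along a subsequence, $u_n \rightharpoonup u$ in $W_0^{1,\h}(\q)$, $u_n \to u$ in every subcritical $L^{s(\cdot)}(\q)$ by Lemmas \ref{em}, \ref{embb}, \ref{g}, and $\varphi_\h(\p{u_n}) \to t_0$ with $M(t_0) \geq m_0$ by $(M_1)$. After a further extraction, $\h(\cdot, \p{u_n}) \xrightharpoonup{*} \mu$ and $\f(\cdot, |u_n|) \xrightharpoonup{*} \nu$, so Theorem \ref{thm3} produces atoms $\{x_i\}_{i \in I} \subset C$ with $\mu_i, \nu_i > 0$ obeying \eqref{2}. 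Fix $i \in I$ and choose a cut-off $\phi_{\epsilon,i}(x) = \phi((x - x_i)/\epsilon)$ with $\phi \in C_c^\infty(\R^N)$, $\phi \equiv 1$ on $B(0,1)$ and $\phi \equiv 0$ outside $B(0,2)$. Testing $\langle J'(u_n), \phi_{\epsilon,i} u_n\rangle = o(1)$, letting $n \to \infty$ and then $\epsilon \to 0^+$, the cross-derivative term $\int u_n\, a(x,\nabla u_n) \cdot \nabla \phi_{\epsilon,i}\,dx$ (controlled by a phase-by-phase H\"older estimate on the shrinking annulus $\{\epsilon < |x-x_i| < 2\epsilon\}$), the $g$-term (Lemma \ref{g}), and the Choquard double integral (via Lemma \ref{HL} restricted to the shrinking support) all vanish, leaving $M(t_0)\mu_i \leq \theta \nu_i$, hence $\mu_i \leq (\theta/m_0)\nu_i$. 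Inserting this into \eqref{2} and running through the four sub-cases of the min/max choice yields the uniform lower bound $\nu_i \geq \min(S^{h_1}, S^{h_2}) \cdot \min((m_0/\theta)^{1+b_1}, (m_0/\theta)^{1+b_2})$. Combining with the boundedness identity passed to the limit,
\[ c = \lim\bigl[J(u_n) - \tfrac{1}{l}\langle J'(u_n), u_n\rangle\bigr] \geq \theta\bigl(\tfrac{1}{l} - \tfrac{1}{\ba_1^-}\bigr)\nu(\q) \geq \theta\bigl(\tfrac{1}{l} - \tfrac{1}{\ba_1^-}\bigr)\nu_i, \]
one arrives at $c \geq c_*$, contradicting $c < c_*$. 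Hence $I = \emptyset$ and $\int_\q \f(x, u_n)\,dx \to \int_\q \f(x, u)\,dx$.

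\textbf{Strong convergence and main obstacle.} The convergence of the $\f$-modular combined with weak convergence upgrades to $u_n \to u$ in $L^\f(\q)$ (Brezis--Lieb type argument), so $\int_\q B(x, u_n)(u_n - u)\,dx \to 0$. Testing $J'(u_n)$ with $u_n - u$, the subcritical and Choquard terms vanish by strong convergence in the appropriate subcritical Lebesgue spaces (Lemmas \ref{g}, \ref{embb}, \ref{HL}), so $M(\varphi_\h(\p{u_n}))\langle \op u_n, u_n - u\rangle \to 0$. Since $M \geq m_0 > 0$, we conclude $\langle \op u_n, u_n - u\rangle \to 0$, and the $S_+$ property of Lemma \ref{S} then gives $u_n \to u$ strongly in $W_0^{1,\h}(\q)$. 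The hard part of the whole argument is the atom-exclusion step: one must carefully match the four sub-cases of the min/max in \eqref{2}, after inserting $\mu_i \leq (\theta/m_0)\nu_i$, to the four exponent pairings $(h_1,b_1),(h_2,b_2)$ prescribed in $c_*$; a secondary delicate point is controlling the cross-derivative term uniformly in $n$ as $\epsilon \to 0$, which for the multiphase operator requires a phase-by-phase H\"older estimate on the thin annulus adapted to the generalized $N$-function $\h$.
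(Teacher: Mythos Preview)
Your proposal is correct and follows essentially the same route as the paper: the same $J(u_n)-\tfrac{1}{l}\langle J'(u_n),u_n\rangle$ combination for boundedness, the same cut-off test $\langle J'(u_n),\phi_{\epsilon,i}u_n\rangle$ to obtain $m_0\mu_i\le\theta\nu_i$, the same use of Theorem~\ref{thm3} together with this inequality to force a lower bound contradicting $c<c_*$, and the same closing argument via Brezis--Lieb and the $S_+$ property of Lemma~\ref{S}. The only cosmetic difference is that the paper eliminates $\nu_i$ first and bounds $\mu_i$ from below (exponents $b_1,b_2$), whereas you eliminate $\mu_i$ and bound $\nu_i$ (exponents $1+b_1,1+b_2$); since $\theta\min\!\bigl((m_0/\theta)^{1+b_1},(m_0/\theta)^{1+b_2}\bigr)=m_0\min\!\bigl((m_0/\theta)^{b_1},(m_0/\theta)^{b_2}\bigr)$, the two computations yield the identical threshold $c_*$.
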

  \begin{proof}
     Let $\{u_n\}$ be $(PS)_c$ a sequence for $J$. From $(M_2),(M_1),(L_2)$ and Lemma \ref{rel}, we have 
\begin{align*}
      & c+O_n(1)+\mm{u_n} \geq J(u_n)-\frac{1}{l}\langle J'(u_n), u_n \rangle\\
      &=\Hat{M}(\varphi_{\h}(\p{u_n}))-\frac{1}{l} M(\varphi_{\h}(\p{u_n}))\x_{\h}(\nabla u_n)+\e \int_{\q}\left(\frac{1}{l}-\frac{1}{\ga(x)}\right)g(x)\ve{u_n}^{\ga(x)}\,dx\\
     & +\theta\int_{\q}\left(c_1(x)\left(\frac{1}{l}-\frac{1}{\ba_1(x)}\right)\ve{u_n}^{\ba_1(x)}+\left(\frac{1}{l}-\frac{1}{\ba_2(x)}\right)c_2(x)a_1(x)^{\frac{\ba_2(x)}{q(x)}}\ve{u_n}^{\ba_2(x)}\right)\,dx \\
    &+\theta\int_{\q}c_3(x)\left(\frac{1}{l}-\frac{1}{\ba_3(x)}\right)a_2(x))^{\frac{\ba_3(x)}{r(x)}}\ve{u_n}^{\ba_3(x)}\,dx\\
    & \nonumber+\frac{\kappa}{2}\int_{\Omega} \int_{\Omega}\frac{F(y,u_n(y))\left(\frac{2}{l}f(x,u_n(x))u_n(x)-F(x,u_n(x))\right)}{\ve{x-y}^{d(x,y)}}\,dxdy\\
    &\geq\left(\frac{\eta}{r^+}-\frac{1}{l}\right)m_0 \x_{\h}(\nabla u_n)+\theta \left(\frac{1}{l}-\frac{1}{\ba_1^-}\right)\int_{\q}\f(x,\ve{u_n}) \,dx\\
    & \geq  \left(\frac{\eta}{r^+}-\frac{1}{l}\right)m_0 \x_{\h}(\nabla u_n)\geq \left(\frac{\eta}{r^+}-\frac{1}{l}\right)m_0  \min \left(\mm{u_n}^{p^-},\mm{u_n}^{r^+}\right).
    \end{align*}
    So, the sequence $\{u_n\}$ is bounded. From Lemma \ref{emb} we get exist a constant $K \in [1.\infty)$ with 
    \be 
    K:=1+\max \left(\sup_{n\in N}\int_{\q}\h(x,\p{u_n})\,dx,\  \sup_{n\in N}\int_{\q}\f(x,\ve{u_n})\,dx \right)
    \ee
    As a consequence of Theorem \ref{thm3} we have 
    \begin{align}\label{5}
    \begin{cases}
       u_n \rightharpoonup u \ \text{in} \ W_0^{1,\h}(\q),\\
       u_n \to u \ a.e.\ \text{in} \ \q \\ 
           \h(\cdot,\p{u_n}) \xrightharpoonup{*} \mu \geq \h(\cdot,\p{u})+\sum_{i\in I} \mu_i \delta_{x_i}\\
\f(\cdot,\ve{u_n})\xrightharpoonup{*}\nu=\f(\cdot,\ve{u})+\sum_{i\in I} \nu_i \delta_{x_i} \ \\
            S \min\{{\nu_i}^{\frac{1}{p^*(x_i)}},{\nu_i}^{\frac{1}{r^*(x_i)}}\} \leq  \max\{{\mu_i}^{\frac{1}{p(x_i)}},{\mu_i}^{\frac{1}{r(x_i)}}\} \\
            \varphi_{\h}(\p{u_n})\to \chi  
    \end{cases}
\end{align}
with $\chi>0$ is a constant.
Next, we will show that $I=\emptyset$. Let, if possible, $I \neq \emptyset$ say $i \in I$. Take $v\in C_c^{\infty}(\q)$ such that $0 \leq v\leq 1$ with $v\equiv 1$ in $B_{1/2}$ and $v \equiv 0$ in $B^c_{1}$. For any given $\ep>0$ and $i\in I$, define 
 \[v_{i,\ep}(x)=v\left(\frac{x-x_i}{\ep}\right).\]
 For fixed $\ep>0$ the sequence $\{v_{i,\ep}u_n\}$ is bounded so,
$\langle J'(u_n), v_{i,\ep}u_n\rangle=O_n(1)$
Thus, 
\begin{align} \label{3.3}
    \begin{split}
 M(\varphi_{\h}(\p{u_n}))\int_{\q}v_{i,\ep}(x)\h(x,\ve{\nabla u_n})\,dx \leq O_n(1)+\e \int_{\q} g(x)\ve{u_n}^{\ga(x)} v_{i,\ep}(x)\,dx\\
 +\theta \int_{\q}\f(x,u_n)v_{i,\ep}(x)\,dx+\kappa \int_{\Omega} \int_{\Omega}\frac{F(y,u_n(y))f(x,u_n(x))u_n(x)v_{i,\ep}(x)}{\ve{x-y}^{d(x,y)}}\,dxdy\\
 +M(\varphi_{\h}(\p{u_n})) \left \lvert \int_{\q}\left(\p{u_n}^{p(x)-2}+a_1(x)\p{u_n}^{q(x)-2}+a_2(x)\p{u_n}^{r(x)-2}\right)u_n\nabla u_n\nabla v_{i,\ep} \,dx \right\rvert
    \end{split}
\end{align}
Let $\delta>0$ be arbitrary, then by Young's inequality, we have 
 \begin{align*}
   &\left \lvert \int_{\q}\left(\p{u_n}^{p(x)-2}+a_1(x)\p{u_n}^{q(x)-2}+a_2(x)\p{u_n}^{r(x)-2}\right)u_n\nabla u_n\nabla v_{i,\ep} \,dx \right\rvert\\
&\leq \int_{\q}\left(\p{u_n}^{p(x)-1}\ve{u_n \nabla v_{i,\ep}}+a_1(x)\p{u_n}^{q(x)-2}\ve{u_n \nabla v_{i,\ep}}+a_2(x)\p{u_n}^{r(x)-2}\ve{u_n \nabla v_{i,\ep}}\right)\,dx\\
&\leq \delta \int_{\q}\h(x,\ve{\nabla u_n})\,dx+C_{\delta}\int_{\q}\h(x,\ve{u_n \nabla v_{i,\ep}})\,dx\\
&\leq K\delta+C_{\delta}\int_{\q}\h(x,\ve{u_n \nabla v_{i,\ep}})\,dx
 \end{align*}
 \begin{align} \label{3.4}
     \begin{split}
\left \lvert\int_{\q}\left(\p{u_n}^{p(x)-2}+a_1(x)\p{u_n}^{q(x)-2}+a_2(x)\p{u_n}^{r(x)-2}\right)u_n\nabla u_n\nabla v_{i,\ep} \,dx \right\rvert \leq \left(K\delta+C_{\delta}\int_{\q}\h(x,\ve{u_n \nabla v_{i,\ep}})\,dx\right)    
     \end{split}
 \end{align}
 Since $u_n \to u \ \text{in} \ L^{\h}(\q)$,
 \be
 \lim_{n \to \infty}\int_{\q}\h(x,\ve{u_n \nabla v_{i,\ep}})\,dx=\int_{\q}\h(x,\ve{u \nabla v_{i,\ep}})\,dx
 \ee
 By H\"older's inequality, we have 
 \begin{align*}
    \int_{\q}\h(x,\ve{u \nabla v_{i,\ep}})\,dx&=\int_{B_{i,\ep}}\left(\ve{u \nabla v_{i,\ep}}^{p(x)}+a_1(x)\ve{u \nabla v_{i,\ep}}^{q(x)}+a_2(x)\ve{u \nabla v_{i,\ep}}^{r(x)}\right)\,dx\\
    &\leq 2\mm{\ve{u}^{p(\cdot)}}_{L^{\frac{p^*(\cdot)}{p(\cdot)}}(B_{i,\epsilon})} \mm{\ve{\nabla v_{i,\epsilon}}^{p(\cdot)}}_{L^{\frac{N}{p(\cdot)}}(B_{i,\epsilon})}\\
 &+2\mm{a_1(\cdot)\ve{u}^{q(x)}}_{L^{\frac{q^*(\cdot)}{q(\cdot)}}(B_{i,\epsilon})} \mm{\ve{\nabla v_{i,\epsilon}}^{q(\cdot)}}_{L^{\frac{N}{q(\cdot)}}(B_{i,\epsilon})}\\
 &+2\mm{a_2(\cdot)\ve{u}^{r(x)}}_{L^{\frac{r^*(\cdot)}{r(\cdot)}}(B_{i,\epsilon})} \mm{\ve{\nabla v_{i,\epsilon}}^{r(\cdot)}}_{L^{\frac{N}{r(\cdot)}}(B_{i,\epsilon})}.
  \end{align*}
 Now, by Lemma \ref{rel}, we have \[\mm{\ve{\nabla v_{i,\epsilon}}^{p(\cdot)}}_{L^{\frac{N}{p(\cdot)}}(B_{i,\epsilon})}\leq \max\left(\left(\int_{B_{i,\ep}} \ve{\nabla v_{i,\epsilon}(x)}^{{p(\cdot)}\frac{N}{p(\cdot)}}\,dx \right)^\frac{{p^-}}{N},\left(\int_{B_{i,\ep}} \ve{\nabla v_{i,\epsilon}(x)}^{{p(\cdot)}\frac{N}{p(\cdot)}}\,dx\right)^\frac{{p^+}}{N}\right)\]
 $\leq \max \left(\left(\int_{B_1}\p{v}^N\,dy\right)^{\frac{{p^-}}{N}},\left(\int_{B_1}\p{v}^N\,dy\right)^{\frac{{p^+}}{N}}\right)<\infty.$
Similarly,
\[\mm{\ve{\nabla v_{i,\epsilon}}^{q(\cdot)}}_{L^{\frac{N}{q(\cdot)}}(B_{i,\epsilon})}< \infty \ \text{and}\ \mm{\ve{\nabla v_{i,\epsilon}}^{r(\cdot)}}_{L^{\frac{N}{r(\cdot)}}(B_{i,\epsilon})}<\infty.\]
So, \be \label{3.6}
\lim_{\ep \to 0}\int_{\q}\h(x,\ve{u \nabla v_{i,\ep}})\,dx=0.
\ee
Since $\delta>0$ is arbitrary, from \eqref{3.4} and \eqref{3.6}, we get 
\be \label{3.7}
\lim_{\ep \to 0} \lim_{n \to \infty}M(\varphi_{\h}(\p{u_n})) \left \lvert \int_{\q}\left(\p{u_n}^{p(x)-2}+a_1(x)\p{u_n}^{q(x)-2}+a_2(x)\p{u_n}^{r(x)-2}\right)u_n\nabla u_n\nabla v_{i,\ep} \,dx \right\rvert=0.
\ee
From Lemma \ref{g} and \eqref{5}, we have 
\be
\lim_{n \to \infty}\int_{\q} g(x)\ve{u_n}^{\ga(x)}v_{i,\ep}(x)\,dx=\int_{\q} g(x) \ve{u}^{\ga(x)}v_{i,\ep}(x)\,dx.
\ee
Moreover, one can obtain 
\be \label{8}
\lim_{\ep \to 0}\int_{\q} g(x) \ve{u}^{\ga(x)}v_{i,\ep}(x)\,dx =0.
\ee
Next, we will prove that
\be \label{3.8}
\lim_{n \to \infty}\int_{\Omega} \int_{\Omega}\frac{F(y,u_n(y))f(x,u_n(x))u_n(x)v_{i,\ep}(x)}{\ve{x-y}^{d(x,y)}}\,dx\,dy=\int_{\Omega} \int_{\Omega}\frac{F(y,u(y))f(x,u(x))u(x)v_{i,\ep}(x)}{\ve{x-y}^{d(x,y)}}\,dx \,dy.
\ee
Thanks to $(L_1)$, $F(\cdot,u_n(\cdot))\in L^{{\frac{2N}{2N-d^-}}}(\q)\cap L^{\frac{2N}{2N-d^+}}(\q)$
and $u_n \to u \ a.e.  \ \text{in}\ \q$. By continuity of $F,$ we get 
$$ F(\cdot,u_n(\cdot))\to F(\cdot,u(\cdot)) \ a.e. \  \text{in}\ \q.$$
Define operator $\K:L^{\frac{2N}{2N-d^+}}(\q) \to L^{\frac{2N}{d^+}}(\q)$ as $\K(w(x))=\frac{1}{\ve{x}^{d^+}}*w(x)$.

Then, $\K$ is a bounded linear operator.
Since $F(\cdot,u_n(\cdot))$ is bounded in $L^{\frac{2N}{2N-d^+}}(\q),$ $\K\left(F(\cdot,u_n(\cdot))\right)$ is a bounded sequence in $L^{\frac{2N}{d^+}}(\q)$. Thus, up to a subsequence 
\[\K\left(F(\cdot,u_n(\cdot))\right) \rightharpoonup \K\left(F(\cdot,u(\cdot))\right) \ \text{in}\ L^{\frac{2N}{d^+}}(\q).\]
So,
\be \label{3.9}
 \ \int_{\q}\phi(x)\K\left(F(\cdot,u_n(\cdot))\right) \,dx \to  \int_{\q}\phi(x)\K\left(F(\cdot,u(\cdot))\right) \,dx \ \forall \phi \in L^{\frac{2N}{2N-d^+}}(\q).
\ee
We have,
\begin{align*}
    \int_{\q} \ve{f(x,u)u v_{i,\ep}}^{\frac{2N}{2N-d^+}}\,dx \leq \int_{\q}c\left(\ve{u}^{\al_1(x)}+a_1(x)^{\frac{\al_2(x)}{q(x)}}\ve{u}^{\al_2(x)}+a_2(x)^{\frac{\al_3(x)}{r(x)}}\ve{u}^{\al_3(x)}\right)^{\frac{2N}{2N-d^+}}\,dx\\
    \leq c \int_{\q}\left(\ve{u}^{\al_1(x){\frac{2N}{2N-d^+}}}+a_1(x)^{{\frac{2N}{2N-d^+}}\frac{\al_2(x)}{q(x)}}\ve{u}^{{\frac{2N}{2N-d^+}}\al_2(x)}+a_2(x)^{{\frac{2N}{2N-d^+}}\frac{\al_3(x)}{r(x)}}\ve{u}^{{\frac{2N}{2N-d^+}}\al_3(x)}\right)\,dx< \infty.
\end{align*}
Thus, from \eqref{3.9}, we get 
\be \label{3.10}
\lim_{n \to \infty}\int_{\Omega} \int_{\Omega}\frac{F(y,u_n(y))f(x,u(x))u(x)v_{i,\ep}(x)}{\ve{x-y}^{d^+}}\,dxdy=\int_{\Omega} \int_{\Omega}\frac{F(y,u(y))f(x,u(x))u(x)v_{i,\ep}(x)}{\ve{x-y}^{d^+}}\,dx\,dy.
\ee
Next, our claim is 
\be \label{3.11}
\lim_{n \to\infty}\int_{\q} \left(\int_{\q}\frac{F(y,u_n(y))}{\ve{x-y}^{d^+}}\,dy\right)\left(f(x,u_n(x))u_n(x)-f(x,u(x))u(x)\right)v_{i,\ep}(x)\,dx=0.\ee
This claim will follow from H\"older's inequality and boundedness of sequence $\{u_n\},$ if we show that 
\[\mm{\left(f(x,u_n(x))u_n(x)-f(x,u(x))u(x)\right)v_{i,\ep}}_{L^{\frac{2N}{2N-d^+}}(\q)} \to 0 \ \text{as}\ n \to \infty.\]
The above expression follows from the dominated convergence theorem,  \eqref{5} and assumption $(L_1)$. Hence, \eqref{3.11} holds.
Now, from \eqref{3.10} and \eqref{3.11}, we have
\be \label{3.12}
\lim_{n \to \infty}\int_{\Omega} \int_{\Omega}\frac{F(y,u_n(y))f(x,u_n(x))u_n(x)v_{i,\ep}}{\ve{x-y}^{d^+}}\,dxdy=\int_{\Omega} \int_{\Omega}\frac{F(y,u(y))f(x,u(x))u(x)v_{i,\ep}}{\ve{x-y}^{d^+}}\,dx\,dy.
\ee
Following similar steps, one can show that \eqref{3.12} still holds if we replace $d^+$ with $d^-$.
Hence, \eqref{3.8} follows.
Next, we will show that 
\be\label{3.13}
\lim_{\ep \to 0}\int_{\Omega} \int_{\Omega}\frac{F(y,u(y))f(x,u(x))u(x)v_{i,\ep}(x)}{\ve{x-y}^{d(x,y)}}\,dxdy=0.
\ee
By Lemma \ref{HL}, we have 
\begin{multline} 
\int_{\Omega} \int_{\Omega}\frac{F(y,u(y))f(x,u(x))u(x)v_{i,\ep}}{\ve{x-y}^{d(x,y)}}\,dx\,dy \leq c \mm{F(y,u(y))}_{L^{\frac{2N}{2N-d^+}}(\q)}\mm{f(x,u(x))u(x)v_{i,\ep}}_{L^{\frac{2N}{2N-d^+}}(\q)}\\
+c\mm{F(y,u(y))}_{L^{\frac{2N}{2N-d^-}}(\q)}\mm{f(x,u(x))u(x)v_{i,\ep}}_{L^{\frac{2N}{2N-d^-}}(\q)}
\end{multline}
As a consequence of the dominated convergence theorem, we have 
\[\mm{f(x,u(x))u(x)v_{i,\ep}}_{L^{\frac{2N}{2N-d^-}}(\q)}\to 0 \ \text{as} \ \ep \to 0\] and 
\[\mm{f(x,u(x))u(x)v_{i,\ep}}_{L^{\frac{2N}{2N-d^+}}(\q)}\to 0 \ \text{as} \ \ep \to 0.\]
Thus, \eqref{3.13} follows.
Hence, 
\be \label{3.15}
\lim_{\ep \to 0}\lim_{n \to \infty}\int_{\Omega} \int_{\Omega}\frac{F(y,u_n(y))f(x,u_n(x))u_n(x)v_{i,\ep}}{\ve{x-y}^{d(x,y)}}\,dxdy=0.
\ee
By using \eqref{3.7}, \eqref{3.15}, \eqref{8}, $(M_1)$ and passing limits ${\ep \to 0}$  and $ {n \to \infty}$ in \eqref{3.3}, we get 
\be \label{6}
m_0 \mu_i \leq \theta \nu_i.
\ee
From \eqref{5} and \eqref{6}, we have 
\begin{align*}
   S \min\left(\left(\frac{m_0}{\theta}\mu_i\right)^{\frac{1}{p^*(x_i)}},\left(\frac{m_0}{\theta}\mu_i\right)^{\frac{1}{r^*(x_i)}}\right)&\leq \max \left(\mu_i^{\frac{1}{p(x_i)}},\mu_i^{\frac{1}{r(x_i)} }\right)\\
    S \left(\frac{m_0}{\theta}\mu_i\right)^{\frac{1}{h_i}} &\leq \mu_{i}^{\frac{1}{b_i}}\\
    \text{where} \ h_i\in\{p^*(x_i),r^*(x_i)\}\ \text{and}\ b_i \in\{p(x_i),r(x_i)\}\\
    \implies \mu_i \geq S^{\frac{h_i b_i}{h_i-b_i}}\left(\frac{m_0}{\theta}\right)^{\frac{b_i}{h_i-b_i}} \\
    \text{We have,}\ h_1 \leq \frac{p(x_i)r^*(x_i)}{r^*(x_i)-p(x_i)}\leq \frac{h_ib_i}{h_i-b_i}\leq \frac{p^*(x_i)r(x_i)}{p^*(x_i)-r(x_i)}\leq h_2\\
    b_1 \leq \frac{p(x_i)}{r^*(x_i)-p(x_i)}\leq \frac{b_i}{h_i-b_i}\leq\frac{r(x_i)}{p^*(x_i)-r(x_i)}\leq b_2\\
    \end{align*}
    Thus, 
    \be
    \mu_i \geq \min \left(S^{h_1},S^{h_2}\right) \min\left(\left(\frac{m_0}{\theta}\right)^{b_1},\left(\frac{m_0}{\theta}\right)^{b_2}\right).
    \ee
   By using $(M_2),(M_1),(L_2)$ and \eqref{5}, we have 
   $$ \begin{aligned}
  & c=\lim_{n \to \infty} \left(J(u_n)-\frac{1}{l}\langle J'(u_n),u_n\right)\\
  &=\lim _{n \rightarrow \infty}\left[\Hat{M}(\varphi_{\h}(\p{u_n}))-\frac{1}{l} M(\varphi_{\h}(\p{u_n}))\x_{\h}(\nabla u_n)+\e \int_{\q}\left(\frac{1}{l}-\frac{1}{\ga(x)}\right)g(x)\ve{u_n}^{\ga(x)}\,dx\right.\\
    &\left.+\theta\int_{\q}\left(c_1(x)\left(\frac{1}{l}-\frac{1}{\ba_1(x)}\right)\ve{u_n}^{\ba_1(x)}+\left(\frac{1}{l}-\frac{1}{\ba_2(x)}\right)c_2(x)a_1(x)^{\frac{\ba_2(x)}{q(x)}}\ve{u_n}^{\ba_2(x)}\right)\,dx \right.\\
    &\left.+\theta\int_{\q}c_3(x)\left(\frac{1}{l}-\frac{1}{\ba_3(x)}\right)a_2(x))^{\frac{\ba_3(x)}{r(x)}}\ve{u_n}^{\ba_3(x)}\,dx\right.\\
      &\left.+\frac{\kappa}{2}\int_{\Omega} \int_{\Omega}\frac{F(y,u_n(y))\left(\frac{2}{l}f(x,u_n(x))u_n(x)-F(x,u_n(x))\right)}{\ve{x-y}^{d(x,y)}}\,dxdy\right]\\
& \geq \lim_{n \rightarrow \infty}\left[\left(\frac{\eta}{r^+}-\frac{1}{l}\right)m_0 \x_{\h}(\nabla u_n)+\theta \left(\frac{1}{l}-\frac{1}{\ba_1^-}\right)\int_{\q}\f(x,u_n) \,dx\right] \\
& \geq \lim_{n \rightarrow \infty} \theta\left(\frac{1}{l}-\frac{1}{\ba_1^-}\right)\int_{\q} v_{i,\ep} \f(x,u_n) \,dx\\
&=\left(\frac{1}{l}-\frac{1}{\ba_1^-}\right)\theta\int_{\q} v_{i,\ep} \, d\nu_n\\
&\geq \left(\frac{1}{l}-\frac{1}{\ba_1^-}\right)\theta \nu_i\\
& \geq \left(\frac{1}{l}-\frac{1}{\ba_1^-}\right) m_0\min \left(S^{h_1},S^{h_2}\right) \min\left(\left(\frac{m_0}{\theta}\right)^{b_1},\left(\frac{m_0}{\theta}\right)^{b_2}\right),
\end{aligned}
$$
 which is a contradiction. So, $I=\emptyset.$
    Thus, from \eqref{5} we get 
    \be
    \lim_{n \to \infty}\int_{\q}\f(x,u_n)\,dx=\int_{\q} \f(x,u)\,dx.
    \ee
    As a consequence of Brezis Lieb's lemma, we get 
    $\lim _{n \to \infty}\int_{\q}\f(x,u_n-u)\,dx=0.$
    From Lemma \ref{rel}, we have $\lim_{n \to \infty} \mm{u_n-u}_{\f}=0$.
    
   As a consequence of  H\"older's inequality and $\lim_{n \to \infty} \mm{u_n-u}_{\f}=0$, one can easily obtain
   \be \label{21}
   \lim_{n \to \infty} \int_{\q}B(x,u_n)(u_n-u)\,dx=0
   \ee
 Also, by H\"older's inequality,$(H_3)$ 
    \[\int_{\q}g(x)\ve{u_n}^{\ga(x)-2}u_n(u_n-u)\,dx\leq 2 \mm{g(x)^{\frac{\ga(x)-1}{\ga(x)}}\ve{u_n}^{\ga(x)-1}}_{L^{\frac{\ga(x)}{\ga(x)-1}}(\q)}\mm{g(x)^{\frac{1}{\ga(x)}}(u_n-u)}_{L^{\ga(x)}(\q)} \to 0 \ \text{as}\ n \to \infty\]
Next, we have
\begin{multline} \label{3.19}
\left \lvert \int_{\Omega}\frac{F(y,u_n(y))f(x,u_n(x))(u_n(x)-u(x))}{\ve{x-y}^{d(x,y)}}\,dxdy\right \rvert \leq c\mm{F(y,u_n(y))}_{L^{\frac{2N}{2N-d^+}}(\q)}\\
\mm{f(x,u_n(x))(u_n(x)-u(x))}_{L^{\frac{2N}{2N-d^+}}(\q)}+c\mm{F(y,u_n(y))}_{L^{\frac{2N}{2N-d^-}}(\q)}\mm{f(x,u_n(x))(u_n(x)-u(x))}_{L^{\frac{2N}{2N-d^-}}(\q)}.  
\end{multline}
Now, by assumption $(L_1)$, H\"older's inequality  and Lemma \ref{embb}, we get 
\begin{multline}
    \int_{\q}\ve{f(x,u_n)(u_n-u)}^{\frac{2N}{2N-d^+}}\,dx \leq 2c\mm{\ve{u_n}^{(\al_1(x)-1)\frac{2N}{2N-d^+}}}_{L^{\frac{\al_1(x)}{\al_1(x)-1}}(\q)} \mm{\ve{u_n-u}^{\frac{2N}{2N-d^+}}}_{L^{\al_1(x)}(\q)}
    \\+2c\mm{a_1(x)^{\frac{2N(\al_2(x)-1)}{q(x)(2N-d^+)}}\ve{u_n}^{(\al_2(x)-1)\frac{2N}{2N-d^+}}}_{L^{\frac{\al_2(x)}{\al_2(x)-1}}(\q)} \mm{a_1(x)^{\frac{2N}{q(x)(2N-d^+)}}\ve{u_n-u}^{\frac{2N}{2N-d^+}}}_{L^{\al_2(x)}(\q)}\\
    +2c\mm{a_2(x)^{\frac{2N(\al_3(x)-1)}{q(x)(2N-d^+)}}\ve{u_n}^{(\al_3(x)-1)\frac{2N}{2N-d^+}}}_{L^{\frac{\al_3(x)}{\al_3(x)-1}}(\q)} \mm{a_2(x)^{\frac{2N}{q(x)(2N-d^+)}}\ve{u_n-u}^{\frac{2N}{2N-d^+}}}_{L^{\al_3(x)}(\q)}
    \to 0 \ \text{as} \ n \to \infty.
    \end{multline}
  Thus, 
  \[\lim_{n \to \infty}\mm{f(x,u_n(x))(u_n(x)-u(x))}_{L^{\frac{2N}{2N-d^+}}(\q)}=0\]
  and following similar steps, we can get 
  \[\lim_{n \to \infty}\mm{f(x,u_n(x))(u_n(x)-u(x))}_{L^{\frac{2N}{2N-d^-}}(\q)}=0.\] 
  So, from \eqref{3.19}, we have
\be \label{22}
\lim_{n \to \infty}  \int_{\q} \int_{\q}\frac{F(y,u_n(y))f(x,u_n(x))(u_n(x)-u(x))}{\ve{x-y}^{d(x,y)}}\,dx\,dy =0.
\ee
Since $\langle J'(u_n), (u_n-u)\rangle=O_n(1),$
\begin{multline}
    M(\varphi_{\h}(\p{u_n}) \int_{\q}\left(\p{u_n}^{p(x)-2}+a_1(x)\p{u_n}^{q(x)-2}+a_2(x)\p{u_n}^{r(x)-2}\right)\nabla u_n \nabla(u_n-u)\,dx\\
    =\langle J'(u_n), (u_n-u)\rangle+\e \int_{\q}g(x)\ve{u_n}^{\ga(x-2)}u_n(u_n-u)\,dx\\
    +\theta \int_{\q} B(x,u_n)(u_n-u)\,dx+\kappa\int_{\q} \int_{\q}\frac{F(y,u_n(y))f(x,u_n(x))(u_n(x)-u(x)}{\ve{x-y}^{d(x,y)}}\,dxdy+O_n(1).
\end{multline}
Passing limit ${n \to \infty}$ and using \eqref{5}, $ M(\varphi_{\h}(\p{u_n})\to M(\chi) \geq m_0$, \eqref{21},\eqref{22} we get 
$$\lim_{n \to \infty} \int_{\q}\left(\p{u_n}^{p(x)-2}+a_1(x)\p{u_n}^{q(x)-2}+a_2(x)\p{u_n}^{r(x)-2}\right)\nabla u_n \nabla(u_n-u)\,dx=0.$$
Thanks to Lemma \ref{S}, up to a subsequence, we get 
$u_n \to u$ in $W_0^{1,\h}(\q)$.
  \end{proof}
\begin{lemma}
    There exists $\kappa^*>0$ such that for every $\kappa>\kappa^*,$ we have 
    \[c_{\kappa}=\inf_{\gamma \in \Gamma} \max_{0 \leq t \leq 1}J(\gamma(t))<c_*,\]
    where $\Gamma=\{\ga \in C([0,1],W_0^{1,\h}(\q)); \ga(0)=0,\ga(1)=e,J(e)<0\}$.
\end{lemma}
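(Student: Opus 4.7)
The plan is to exhibit a single admissible path in $\Gamma$ along which $J$ stays below $c_*$ for large $\kappa$. First I would fix $v_0\in C_c^{\infty}(\q)\setminus\{0\}$ with $v_0\geq 0$ a.e.\ and introduce the auxiliary functional
\[
J_0(u):=J(u)+\frac{\kappa}{2}\int_{\q}\int_{\q}\frac{F(y,u(y))F(x,u(x))}{\ve{x-y}^{d(x,y)}}\,dx\,dy,
\]
which is independent of $\kappa$. Running the argument of Lemma \ref{mpg1}$(ii)$ for $J_0$ (the Choquard integral was never used there) yields some $T>0$ with $J_0(Tv_0)<0$, and since the Choquard integral is pointwise nonnegative, $J(Tv_0)\leq J_0(Tv_0)<0$ for every $\kappa\geq 0$. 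Hence the affine path $\gamma(t):=tTv_0$, $t\in[0,1]$, belongs to $\Gamma$ uniformly in $\kappa$, so
\[
c_\kappa\leq \max_{t\in[0,1]}J(tTv_0)=\max_{s\in[0,T]}J(sv_0).
\]

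It then suffices to show $\max_{s\in[0,T]}J(sv_0)\to 0$ as $\kappa\to\infty$; since $c_*>0$, this will give $c_\kappa<c_*$ for all sufficiently large $\kappa$. I plan to argue by contradiction: if some $\delta\in(0,c_*)$, $\kappa_n\to\infty$ and $s_n\in[0,T]$ satisfied $J(s_nv_0)\geq\delta$, then up to a subsequence $s_n\to s_\infty\in[0,T]$, and two cases arise. If $s_\infty=0$, continuity of $J_0$ and nonnegativity of the Choquard term give $J(s_nv_0)\leq J_0(s_nv_0)\to J_0(0)=0$, contradicting $J(s_nv_0)\geq\delta$. If $s_\infty>0$, then $\{v_0>0\}$ has positive measure and $(L_2)$ forces $F(x,s_\infty v_0(x))>0$ on it, so the double Choquard integral at $s_\infty v_0$ is strictly positive; by continuity in $W_0^{1,\h}(\q)$ it stays bounded below along the sequence, and consequently $\tfrac{\kappa_n}{2}\iint\frac{F(y,s_nv_0(y))F(x,s_nv_0(x))}{\ve{x-y}^{d(x,y)}}\,dx\,dy\to\infty$. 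Since $J_0(s_nv_0)$ remains bounded, this forces $J(s_nv_0)\to-\infty$, again a contradiction.

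The main obstacle is verifying the continuity in $W_0^{1,\h}(\q)$ of the Choquard functional $u\mapsto\iint\frac{F(y,u)F(x,u)}{\ve{x-y}^{d(x,y)}}\,dx\,dy$; but this is precisely the type of convergence already handled in the proof of Lemma \ref{PS} via Lemma \ref{var} and the $L^{2N/(2N-d^{\pm})}$-bounds on $F(\cdot,u)$ obtained in Lemma \ref{HL}. Combined with the strict positivity of $F$ on $\R^+$ from $(L_2)$, the dichotomy above goes through, and choosing any $\kappa^*$ for which $\max_{s\in[0,T]}J(sv_0)<c_*$ whenever $\kappa>\kappa^*$ completes the argument.
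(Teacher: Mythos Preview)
Your proof is correct and follows the same overall plan as the paper: fix a nonnegative test direction, show that the maximum of $J$ along the associated ray tends to $0$ as $\kappa\to\infty$, and conclude since $c_*>0$. The paper, however, reaches this through the maximiser $t_\kappa$ of $s\mapsto J(sz)$: from the critical point relation $\langle J'(t_\kappa z),t_\kappa z\rangle=0$ it derives the identity \eqref{3.23}, uses it together with $(M_2)$ and \eqref{3.1} to show $\{t_\kappa\}$ is bounded, and then argues by contradiction (blowing up the Choquard term in \eqref{3.23}) that $t_\kappa\to0$, finishing with the crude upper bound $J(t_\kappa z)\le\Hat{M}(\varphi_{\h}(t_\kappa\nabla z))\to0$. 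Your route via the $\kappa$-free functional $J_0$ and the dichotomy on $s_\infty$ bypasses the Euler--Lagrange identity entirely: it uses only the nonnegativity of the Choquard integral on nonnegative functions (from $(L_2)$) and the continuity of $J$ (already recorded as $J\in C^1$) along the norm-convergent sequence $s_nv_0$. This is a bit more elementary and, as you note, makes the endpoint $e=Tv_0$ explicitly independent of $\kappa$ from the outset, whereas in the paper this uniformity is implicit.
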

\begin{proof}
Let $0<z\in W_0^{1,\h}(\q)$ with $\mm{z}=1$.  As $J(0)=0$ and by Lemma \ref{mpg1} $J$ has mountain pass geometry so there exists $t_{\kappa}>0$ such that $J(t_{\kappa}z)=\max_{t \geq 0}J(tz)$.
So, $\langle J'(t_{\kappa}z),t_{\kappa}z\rangle=0$ i.e. 
\begin{multline}\label{3.23}
   M(\varphi_{\h}(t_{\kappa}\p{z}))\x_{\h}(t_{\kappa}\nabla z)=\e \int_{\q}t_{\kappa}^{\ga(x)} g(x) \ve{z}^{\ga(x)}\,dx\\
   +\theta \int_{\q} \f(x,t_{\kappa}z)\,dx+\kappa \int_{\q} \int_{\q}\frac{F(y,t_{\kappa}z(y))f(xt_{\kappa}z(x))t_{\kappa}z(x)}{\ve{x-y}^{d(x,y)}}\,dxdy
\end{multline}
Let if possible $t_{\kappa}>\max(t_0,1)$ then by \eqref{3.23}, $(M_2),$ we have 
\begin{align*}
    & \e t_{\kappa}^{\ga^-} \int_{\q} g(x)\ve{z}^{\ga(x)}\,dx+\theta t_{\kappa}^{\ba_1^-}\int_{\q}\f(x,z)\,dx \leq \e \int_{\q}t_{\kappa}^{\ga(x)} g(x) \ve{z}^{\ga(x)}\,dx+\theta \int_{\q} \f(x,t_{\kappa}z)\,dx
    \\
    &+\kappa \int_{\q} \int_{\q}\frac{F(y,t_{\kappa}z(y))f(xt_{\kappa}z(x))t_{\kappa}z(x)}{\ve{x-y}^{d(x,y)}}\,dxdy\\
    &= M(\varphi_{\h}(t_{\kappa}\p{z}))\x_{\h}(t_{\kappa}\nabla z) \leq \frac{r^+}{\eta} \Hat{M}(\varphi_{\h}(t_{\kappa}\p{z}))\\
    &\leq c\frac{r^+}{\eta} (\varphi_{\h}(t_{\kappa}\p{z}))^{\frac{1}{\eta}} \leq c\frac{r^+}{\eta} t_{\kappa}^{\frac{r^+}{\eta}}(\varphi_{\h}(\p{z}))^{\frac{1}{\eta}}.
\end{align*}
Since $\frac{r^+}{\eta}<\ga^-<\ba_1^-$, the sequence $\{t_{\kappa}\}$ is bounded.
Hence there exist a subsequence $\kappa_i \to \infty$ as $i \to \infty$ and $t_* \geq 0$ such that $t_{\kappa_i} \to t_*$ as $i \to \infty$.
Next, we claim that $t_*=0.$ Let if possible $t_*>0.$ Then from $(L_1)$ and the dominated convergence theorem, 
\[\int_{\q} \int_{\q}\frac{F(y,t_{\kappa_i}z(y))f(x,t_{\kappa_i}z(x))t_{\kappa_i}z(x)}{\ve{x-y}^{d(x,y)}}\,dxdy \to \int_{\q} \int_{\q}\frac{F(y,t_{*}z(y))f(x,t_{*}z(x))t_{*}z(x)}{\ve{x-y}^{d(x,y)}}\,dxdy \]
$\implies \lim_{i \to \infty}\kappa_i \int_{\q} \int_{\q}\frac{F(y,t_{*}z(y))f(x,t_{*}z(x))t_{*}z(x)}{\ve{x-y}^{d(x,y)}}\,dxdy=\infty. $

By \eqref{3.23}, we get 
\[ M(\varphi_{\h}(t_{*}\p{z}))\x_{\h}(t_{*}\nabla z)=\infty,\]
which is a contradiction, since $M$ is continuous and $t_*$ is bounded.
Thus $t_*=0,$ i.e., $t_{\kappa} \to 0$ as $\kappa \to \infty.$
Choose $e=T_0z$ such that $J(T_0z)<0$. For $\ga(t)=tT_0z,$ we have
\[c_{\kappa}=\inf_{\gamma \in \Gamma} \max_{0 \leq t \leq 1}J(\gamma(t)) \leq\max_{0 \leq t \leq 1}J(tT_0z)\leq J(t_{\kappa} z)\leq \Hat{M}(\varphi_{\h}((t_k \p{z}))).\]
Since $M$ is continuous and $t_{\kappa} \to 0$ as $\kappa \to \infty,$ we have 
\[
\lim_{\kappa \to \infty} \Hat{M}(\varphi_{\h}((t_k \p{z})))=0 \implies \lim_{\kappa \to \infty}c_{\kappa}=0.
\]
Thus, there exist $\kappa^*>0$ such that for all $\kappa>\kappa^*,$  $c_{\kappa}<c_*$.
\end{proof}
\begin{proof}[Proof of Theorem \ref{thm1}]
 We have $J(0)=0$, and from Lemma \ref{mpg1}, $J$ satisfies mountain pass geometry. Furthermore, by Lemma \ref{PS}, $J$ satisfies $(PS)_c$ condition. Thus by the mountain pass theorem,  there exists $u \in W_0^{1,\h}(\q)$  such that $J(u)=c_{\kappa}>0$ and $J'(u)=0$. Consequently, $u$ is a nontrivial weak solution of \eqref{model}.

 By Lemma \ref{mpg1}, $J$ is bounded below on $\overline{B}_{\omega}(0)$. From Lemma \ref{m3}, we have 
 \[\bar{c_{\kappa}}=\inf_{u \in \overline{B}_{\omega}(0)}J(u)<0<\inf_{u \in \partial{B}_{\omega}(0)}J(u).\]
 Choose $\ep>0$ such that 
 \[0<\ep<\inf_{u \in \partial{B}_{\omega}(0)}J(u)-\inf_{u \in \overline{B}_{\omega}(0)} J(u).\]
 Thanks to Ekeland's variational principle, there exists a sequence $\{u_{\ep}\} \subset B_{\omega}$ such that 
 \begin{gather}
 \begin{split}
   \bar{c_{\kappa}} <J(u_{\ep})<\bar{c_{\kappa}}+\ep\\
J(u_{\ep})\leq J(u)+\ep \mm{u-u_{\ep}},\ u\neq u_{\ep}  
 \end{split}
\end{gather}
 $\forall u \in \overline{B}_{\omega}(0)$. Applying  Lemma \ref{PS}, there exists $v \in W^{1,\h}_0(\q)$ such that $J(v)=\bar{c_{\kappa}}<0$ and $J'(v)=0$. Hence, $v$ is another solution of \eqref{model}, since $J(u)>0$.
\end{proof}
\section{Proof of Theorem \ref{thm2}}
The genus theory and symmetric mountain pass theorem are the primary tools to establish Theorem \ref{thm2}.

For a Banach space $X$, define 
\[\Theta=\{U\in X \setminus \{0\}:U \ \text{is closed in }\  X \ \text{and symmetric w..r.t. origin}\}\]
\[\ga(U)=\text{genus of } \ U \in \Theta\]
\[S_c=\{u\in X:J(u)=c, J'(u)=0\}.\]
\begin{lemma}\cite{Minimax}\label{fin}
    Let $X$ be an infinite dimensional Banach space, and $J \in C^1(X, \mathbb{R})$ be a functional satisfying the $(P S)_c$ condition for $0<c<c_*, J(u)=J(-u)$, and $J(0)=0$. If $X=Y \oplus Z$, where $Y$ is finite dimensional, and J satisfies:
    \begin{itemize}
        \item [$(S_1)$] there exist constants $\rho, \alpha>0$ such that $J(u) \geq \alpha$ for all $u \in \partial B_\rho \cap Z$;
        \item [$(S_2)$] for any finite dimensional subspace $\widetilde{X} \subset X$ there exists $R=R(\widetilde{X})>0$ such that $J(u) \leq 0$ for all $u \in \widetilde{X} \backslash B_{R(\widetilde{X})}$, where $B_{R(\widetilde{X})}=\{x \in \widetilde{X}:\|u\|<R\}$.
    \end{itemize}
Let $Y$ be $t$ dimensional and $Y=\operatorname{span}\left\{e_1, \ldots, e_t\right\}$. For $n \geq t$, inductively choose $e_{n+1} \notin X_n:=$ span $\left\{e_1, \ldots, e_n\right\}$. We assume $R_n=R\left(X_n\right)$ and $D_n=B_{R_n} \cap X_n$. Define
$$
G_n:=\left\{z \in C\left(D_n, X\right): z \text { is odd and } z(u)=u, \forall u \in \partial B_{R_n} \cap X_n\right\}
$$
and
$$
\Gamma_s:=\left\{z \overline{\left(D_n \backslash V\right)}: z \in G_n, n \geq s, V \in \Theta, \text { and } \gamma(V) \leq n-s\right\}
$$
where $\gamma(V)$ is genus of $V$. For each $s \in \mathbb{N}$, let
$$
c_s=\inf _{T \in \Gamma_s} \max_{u \in T} J(u).
$$

Then, $0<\alpha \leq c_s \leq c_{s+1}$ for $s>t$, and if $c_s<c_*$, we have that $c_s$ is the critical value of J. Moreover, for $s>t$, if $c_s=c_{s+1}=\cdots=c_{s+\tau}=c<c_*$, then $\gamma\left(S_c\right) \geq \tau+1$.
\end{lemma}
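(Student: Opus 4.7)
The plan is to deduce the lemma from the classical combination of Krasnoselskii genus theory with an equivariant deformation lemma, the latter being available thanks to $J$ being even and the $(PS)_c$ condition holding below the threshold $c_*$. I would split the argument into three stages matching the three conclusions: positivity and monotonicity of $c_s$, criticality of $c_s$ whenever $c_s<c_*$, and the multiplicity bound $\gamma(S_c)\geq \tau+1$.

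For the first stage, the key ingredient is a linking-type intersection property: every $T=z(\overline{D_n\setminus V})\in\Gamma_s$ with $s>t$ must meet $\partial B_\rho\cap Z$. I would establish this in the Bartolo--Benci--Fortunato spirit by combining the fact that $z$ is odd and restricts to the identity on $\partial B_{R_n}\cap X_n$ (which has genus $n$) with the genus monotonicity $\gamma(z(A))\geq\gamma(A)$ and the subadditivity $\gamma(\overline{D_n\setminus V})\geq n-\gamma(V)\geq s$, forcing $\gamma(T)\geq s>t=\dim Y$; a suitable odd continuous retraction built from the projection onto $Y$ on $\{\|\pi_Z u\|<\rho\}$ and the radial retraction onto $\partial B_\rho\cap Z$ elsewhere then rules out the possibility that $T$ misses $\partial B_\rho\cap Z$. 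Hypothesis $(S_1)$ gives $J\geq\alpha$ there, hence $c_s\geq\alpha$. Monotonicity $c_s\leq c_{s+1}$ is immediate from $\Gamma_{s+1}\subset\Gamma_s$.

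For the criticality step, I would argue by contradiction. If $c_s<c_*$ were regular, the $(PS)_{c_s}$ condition together with the evenness of $J$ would produce, via the symmetric pseudo-gradient construction, an odd continuous deformation $\Phi:X\to X$ with $\Phi(J^{c_s+\delta})\subset J^{c_s-\delta}$ for some $\delta\in(0,c_s)$, supported in a small neighborhood of the level set $J^{-1}(c_s)$. Hypothesis $(S_2)$ combined with $c_s>0$ ensures $J\leq 0$ on $\partial B_{R_n}\cap X_n$, so $\Phi$ acts as the identity on that sphere and $\Phi\circ z\in G_n$. Choosing $T\in\Gamma_s$ with $\max_T J<c_s+\delta$ then gives $\Phi(T)\in\Gamma_s$ with $\max_{\Phi(T)}J<c_s-\delta$, contradicting the definition of $c_s$.

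The multiplicity claim runs on a variant of the same deformation, arranged to avoid a small symmetric neighborhood of $S_c$. Assuming $c_s=\cdots=c_{s+\tau}=c<c_*$ and $\gamma(S_c)\leq\tau$, the compactness of $S_c$ from $(PS)_c$ together with the upper semicontinuity of the genus yields a symmetric open neighborhood $U\supset S_c$ with $\gamma(\overline{U})\leq\tau$ and a deformation $\Phi$ satisfying $\Phi(J^{c+\delta}\setminus U)\subset J^{c-\delta}$ while fixing $\partial B_{R_n}\cap X_n$. Picking $T=z(\overline{D_n\setminus V})\in\Gamma_{s+\tau}$ with $\max_T J<c+\delta$ and setting $V'=V\cup z^{-1}(\overline{U})$, the pullback inequality $\gamma(z^{-1}(\overline{U}))\leq\gamma(\overline{U})\leq\tau$ (valid because any odd map $\overline{U}\to\R^\tau\setminus\{0\}$ precomposes with $z$) together with genus subadditivity gives $\gamma(V')\leq (n-s-\tau)+\tau=n-s$. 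Hence $(\Phi\circ z)(\overline{D_n\setminus V'})\in\Gamma_s$ with $J$-maximum at most $c-\delta$, contradicting $c_s=c$; therefore $\gamma(S_c)\geq\tau+1$.

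The main obstacle I foresee is the careful bookkeeping in the multiplicity step, namely verifying that $\Phi\circ z$ remains in $G_n$ and that the index shift from $\Gamma_{s+\tau}$ back to $\Gamma_s$ lands correctly. The former depends on $(S_2)$ and $c>0$ confining the support of the deformation away from $\partial B_{R_n}\cap X_n$ so that the boundary identity condition defining $G_n$ is preserved; the latter is precisely the counting $(n-s-\tau)+\tau=n-s$ which matches the index $s$ exactly when one starts at $\Gamma_{s+\tau}$. Once these two points are checked, the rest of the argument is standard.
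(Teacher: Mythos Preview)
The paper does not prove this lemma at all: it is stated with the citation \cite{Minimax} (Rabinowitz's CBMS monograph) and used as a black box in the proof of Theorem~\ref{thm2}. Your proof plan is a faithful sketch of the standard Rabinowitz argument---the intersection lemma via genus, the equivariant deformation to show criticality, and the genus-neighborhood trick for the multiplicity bound---so there is nothing to compare against; you are simply supplying the omitted classical proof, and the outline is correct.
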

\begin{lemma}
There is a sequence $\left\{D_n\right\} \subset(0,+\infty)$ independent of $\theta$ with $D_n \leq D_{n+1}$, such that for any $\lambda, \theta, \kappa>0$,
$$
c_n^{\theta}=\inf_{T \in \Gamma_n} \max_{u \in T} J(u)<D_n .
$$
\end{lemma}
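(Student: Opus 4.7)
The plan is to bound $c_n^{\theta}$ from above by testing against a specific, $\theta$-independent element of $\Gamma_n$ and then using the sign of the subtracted terms in $J$ to obtain a $\theta$-uniform estimate.

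First I would note that under $(L_2)$--$(L_3)$ the primitive $F(x,\cdot)$ is even and nonnegative on $\R$: oddness of $f(x,\cdot)$ forces evenness of $F(x,\cdot)$, while $(L_2)$ gives $F(x,t)>0$ for $t>0.$ Combined with $c_i\geq 0,\ a_i\geq 0,\ g>0,$ each term of $J$ multiplied by $\e,\ \theta,$ or $\kappa$ is nonnegative, so
\[
J(u)\;\leq\;\Hat{M}\bigl(\varphi_{\h}(\p{u})\bigr)\qquad\forall\,u\in W_0^{1,\h}(\q),
\]
with the right-hand side independent of $\e,\theta,\kappa.$

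Next I would show that the radius $R_n$ produced by $(S_2)$ in Lemma \ref{fin} can be chosen independently of $\theta.$ The key observation is that the Choquard term alone already forces $J\to-\infty$ on finite-dimensional subspaces: the Ambrosetti--Rabinowitz type inequality in $(L_2)$ yields $F(x,t)\geq c\ve{t}^{l/2}$ for $\ve{t}$ large, which together with the equivalence of norms on $X_n$ and the lower boundedness of $\ve{x-y}^{-d(x,y)}$ on $\q\times\q$ produces a constant $c_n>0$ with
\[
\int_{\q}\int_{\q}\frac{F(y,u(y))F(x,u(x))}{\ve{x-y}^{d(x,y)}}\,dx\,dy\;\geq\; c_n\mm{u}^{l}\qquad\text{for }u\in X_n,\ \mm{u}\text{ large},
\]
while $(M_2)$ together with \eqref{3.1} gives $\Hat{M}(\varphi_{\h}(\p{u}))\leq C_n\mm{u}^{r^+/\eta}.$ Since $l>r^+/\eta$ by $(L_2)$, one can pick $R_n=R_n(\e,\kappa,n)$ with $J(u)\leq 0$ for every $u\in X_n\setminus B_{R_n}$ and every $\theta\geq 0;$ with this choice the class $\Gamma_n$ is $\theta$-independent.

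Finally I take the trivial test element $T_n=\overline{B_{R_n}}\cap X_n\in \Gamma_n,$ obtained from the definition of $\Gamma_n$ by choosing $s=n,\ V=\emptyset$ (so that $\gamma(V)=0\leq n-n$), and $z=\mathrm{id}.$ By Lemma \ref{rel} one has $\varphi_{\h}(\p{u})\leq \max\{\mm{u}^{p^-},\mm{u}^{r^+}\}\leq K_n$ on $T_n,$ and continuity of $\Hat{M}$ on $[0,K_n]$ gives
\[
c_n^{\theta}\;\leq\;\max_{u\in T_n}J(u)\;\leq\;\max_{s\in[0,K_n]}\Hat{M}(s)=:\widetilde{D}_n.
\]
Setting $D_n:=1+\max\{\widetilde{D}_1,\dots,\widetilde{D}_n\}$ yields a non-decreasing, $\theta$-independent sequence satisfying $c_n^{\theta}<D_n.$ The main obstacle is securing the $\theta$-uniform $R_n;$ once that is in hand via the Choquard-driven coercivity, the rest collapses to the pointwise bound $J\leq\Hat{M}\circ\varphi_{\h}$ and finite-dimensional compactness.
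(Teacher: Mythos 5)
Your proposal is correct and, in its core step, matches the paper's: since the $\theta$-term in $J$ is nonnegative (using $g>0$, $c_1>0$, $c_2,c_3,a_1,a_2\geq 0$, and $F\geq 0$ from $(L_2)$--$(L_3)$), one can drop it and bound $c_n^\theta$ by the corresponding infimum--maximum of a $\theta$-free functional. The difference is in how much you drop and what you say about $\Gamma_n$. The paper keeps the $\lambda$- and $\kappa$-terms and simply sets $D_n=\inf_{T\in\Gamma_n}\max_{u\in T}\bigl(\Hat M(\varphi_\h(\p u))-\e\int g\,|u|^{\gamma}/\gamma-\tfrac{\kappa}{2}\iint FF/|x-y|^{d}\bigr)+1$, asserting finiteness and monotonicity ``from the definition of $\Gamma_n$,'' while leaving unstated that $\Gamma_n$ itself (through the radius $R_n$ coming from $(S_2)$ of Lemma \ref{fin}) must not vary with $\theta$. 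You drop all three parameter terms, bound $c_n^\theta\leq\max_{[0,K_n]}\Hat M$, and --- this is the genuine addition --- verify that $R_n$ can be fixed independently of $\theta$ because $(L_2)$ forces $F(x,t)\gtrsim |t|^{l/2}$ for $|t|$ large (after the standard integration of $l/(2t)\leq f/F$), so the Choquard term alone dominates $\Hat M\circ\varphi_\h\lesssim\|u\|^{r^+/\eta}$ on any $X_n$ by $l>r^+/\eta$, making $(S_2)$ $\theta$-uniform. Your explicit test element $T_n=\overline{B_{R_n}}\cap X_n$ (taking $V=\emptyset$, $z=\mathrm{id}$ in the definition of $\Gamma_n$) is legitimate, and $D_n:=1+\max\{\widetilde D_1,\dots,\widetilde D_n\}$ gives the required strictness and monotonicity. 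In short: same skeleton, but your version repairs the implicit $\theta$-independence of $\Gamma_n$ that the paper glosses over, at the cost of invoking the Choquard coercivity instead of the $\theta B$ coercivity used elsewhere in the paper's $(S_2)$ argument.
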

\begin{proof}
    \begin{align*}
    & c_n^{\theta}=\inf_{T \in \Gamma_n} \max _{u \in T} J(u)\\
     &\leq \inf_{T \in \Gamma_n} \max _{u \in T} \left(\Hat{M}(\varphi_{\h}(\p{u}))
    -\e\int_{\q}\frac{g(x)}{\ga(x)}\ve{u}^{\ga(x)}\, dx-\frac{\kappa}{2}\int_{\Omega} \int_{\Omega} \frac{F(y,u(y))F(x,u(x))}{\ve{x-y}^{d(x,y)}}\,dxdy \right)<D_n,\\
    &\text{where}\ D_n=\inf_{T \in \Gamma_n} \max _{u \in T} \left(\Hat{M}(\varphi_{\h}(\p{u}))
    -\e\int_{\q}\frac{g(x)}{\ga(x)}\ve{u}^{\ga(x)}\, dx-\frac{\kappa}{2}\int_{\Omega} \int_{\Omega} \frac{F(y,u(y))F(x,u(x))}{\ve{x-y}^{d(x,y)}}\,dxdy +1\right).
    \end{align*}
    Thus, $D_n<\infty$ and $D_n \leq D_{n+1}$ follows from the definition of $\Gamma_n$.
\end{proof}
\textbf{Proof of Theorem \ref{thm2}}
From $(L_3),$ we know that $J$ is an even functional with $J(0)=0$. From Lemma \ref{PS}, $J$ satisfies $(PS)_c$ condition for $c<c_*$.
The proof of $(S_1)$ in Lemma \ref{fin} is similar to the proof of Lemma \ref{mpg1}. Now, we need to prove $(S_2)$.

Let $\widetilde{X}\subset W_0^{1,\h}(\q)$ be a fixed finite-dimensional space. Let us assume $R=R(\widetilde{X})>1,$ then for $u \in \widetilde{X}$ with $\mm{u}\geq R>1,$ we have 
 \begin{multline} \label{3.25}
    J(u) \leq c \left(\varphi_{\h}(\p{u}\right)^{\frac{1}{\eta}}-\e \int_{\q}\frac{g(x)}{\ga(x)}\ve{u}^{\ga(x)}\,dx-\theta \int_{\q} \Hat{B}(x,u)\,dx\\
   \leq \frac{c}{{p^-}^{\frac{1}{\eta}}}\left(\x_{\h}(\p{u}\right)^{\frac{1}{\eta}}-\frac{\e}{\ga^+}\int_{\q}g(x)\ve{u}^{\ga(x)}\,dx-\frac{\theta}{\ba_3^+}\int_{\q} \f(x,u)\,dx.
  \end{multline}
Since $\widetilde{X}$ is a finite dimensional space so, all the norms are equivalent.  
\[\int_{\q}g(x)\ve{u}^{\ga(x)}\,dx \geq c \mm{u}^{\ga^-}; \int_{\q} \f(x,u)\,dx \geq c \mm{u}^{\ba_1^-} \ \forall u \in \widetilde{X}.\]
From \eqref{3.25} and Lemma \ref{rel}, we get 
\[J(u)\leq \frac{c}{(p^-)^{\frac{1}{\eta}}} \mm{u}^{\frac{r^+}{\eta}}-\frac{\e c}{\ga^+}\mm{u}^{\ga^-}-\frac{\theta c}{\ba_3^+}\mm{u}^{\ba_1^-}.\]
 As ${\frac{r^+}{\eta}}<\ba_1^-$ so we can choose $R>0$ large enough such that $J(u)<0$ whenever $\mm{u}\geq R$. Thus, $(S_2)$ follows.

 Choose $\theta_n>0$ such that 
 \[D_n < \left(\frac{1}{l}-\frac{1}{\ba_1^-}\right)m_0 \min \left(S^{h_1},S^{h_2}\right) \min\left(\left(\frac{m_0}{\theta_n} \right)^{b_1},\left(\frac{m_0}{\theta_n} \right)^{b_2}\right)\]
 Thus, for $\theta_{n+1}\leq\theta< \theta_n$, we have 
 \[0 \leq c_1^{\theta}\leq c_2^{\theta}\leq \cdots \leq c_n^{\theta}<D_n<c_*\]
Hence, by Lemma \ref{fin},  $c_1^\theta \leq c_2^\theta \leq \cdots \leq c_n^\theta$ are critical points of $J$. Thus, if $c_1^\theta<c_2^\theta<\cdots<c_n^\theta$, then the problem \eqref{model} has at least $n$ pairs of nontrivial solutions, and if $c_s^\theta=c_{s+1}^\theta$ for some $s=1,2, \ldots, n$, it follows from Lemma \ref{fin} that $S_{c_s^\theta}$ is an infinite set, and problem \eqref{model} has infinitely many nontrivial solutions. \qed
\subsection*{Acknowledgment} The first author acknowledges the financial support from the University Grants Commission, India (Ref. No. 191620115768). The second author is supported by the Science and Engineering Research Board, India, under the CRG/2020/002087 grant.

\end{document}